\newcommand{\R}{\mathds{R}}
\newcommand{\N}{\ensuremath{\mathbb{N}}}
\newcommand{\Z}{\ensuremath{\mathbb{Z}}}
\newtheorem {theorem} {Theorem}
\newtheorem {prop} [theorem] {Proposition}
\newtheorem {lemma} [theorem] {Lemma}
\newtheorem {remark} {Remark}
\begin{document}

\title[Periodic oscillations in electrostatic actuators under time delayed feedback controller]{Periodic oscillations in electrostatic actuators under time delayed feedback controller}

\author[Pablo Amster, Andr\'es Rivera, John A. Arredondo]
{Pablo Amster, Andr\'es Rivera, John A. Arredondo}

\address{$^1$ Departamento de Matem\'aticas
Universidad de Buenos Aires,
Argentina.}
\address{
$^2$ Departamento de Ciencias Naturales y Matem\'aticas
Pontificia Universidad Javeriana Cali, Facultad de Ingenier\'ia y Ciencias,
Calle 18 No. 118--250 Cali, Colombia.}
\address{
$^3$ Departamento de Matemáticas Fundación Universitaria Konrand Lorenz, Facultad de Ciencias e Ingeniería, Cra 9 bis 62-43, Bogotá-Colombia.}

\email{pamster@dm.edu.ar, amrivera@javerianacali.edu.co, alexander.arredondo@konrandlorenz.edu.co}\emph{}

\subjclass[2010]{34C10, 34C25, 34C60, 34D20.}

\keywords{Microelectromechanical systems (MEMS); periodic solutions; stability; Feedback controller; Delay equation.}

\date{}
\dedicatory{}
\maketitle

\begin{center}\rule{0.9\textwidth}{0.1mm}
\end{center}
\begin{abstract}
In this paper, we prove the existence of two positive $T$-periodic solutions of an electrostatic actuator modeled by the time-delayed Duffing equation
\[
\ddot{x}(t)+f_{D}(x(t),\dot{x}(t))+ x(t)=1- \dfrac{e \mathcal{V}^{2}(t,x(t),x_{d}(t),\dot{x}(t),\dot{x}_{d}(t))}{x^2(t)}, \qquad x(t)\in\,]0,\infty[
\]
where $\displaystyle{x_{d}(t)=x(t-d)}$ and $\displaystyle{\dot{x}_{d}(t)=\dot{x}(t-d),}$ denote position and velocity feedback respectively, and
\[
\mathcal{V}(t,x(t),x_{d}(t),\dot{x}(t),\dot{x}_{d}(t))=V(t)+g_{1}(x(t)-x_{d}(t))+g_{2}(\dot{x}(t)-\dot{x}_{d}(t)),
\]
 is the feedback voltage with positive input voltage $V(t)\in C(\mathbb{R}/T\Z)$ for $e\in \mathbb{R}^{+}, g_{1},g_{2}\in \mathbb{R}$, $d\in [0,T[$. The damping force $f_{D}(x,\dot{x})$ can be linear, i.e.,  $f_{D}(x,\dot{x}) = c\dot{x}$, $c\in\mathbb{R}^+$ or squeeze film type, i.e., $f_{D}(x,\dot{x}) = \gamma\dot{x}/x^{3}$, $\gamma\in\mathbb{R}^+$. The fundamental tool to prove our result is a local continuation method of periodic solutions from the non-delayed case $(d=0)$. Our approach provides new insights into the delay phenomenon on microelectromechanical systems and can be used to study the dynamics of a large class of delayed Li\'enard equations that govern the motion of several actuators, including the comb-drive finger actuator and the torsional actuator. Some numerical examples are provided to illustrate our results.
\end{abstract}
\begin{center}\rule{0.9\textwidth}{0.1mm}
\end{center}

\section*{1. Introduction}
The Nathanson's actuator is a fundamental theoretical model of a recent technology called Micro-Electro-Mechanical Systems (MEMS), in which two parallel plates\footnote{The plate can have any shape, but it is usual to assume a rectangular shape.} (electrodes) are placed at an initial and positive distance. One plate is stationary and the other is allowed to move. Both electrodes are biased by a voltage $V(t)$ where $t$ is an independent variable related to time. Therefore an electrostatic force emerges $f_{E}=f_{E}(t,x)$ acting on both electrodes and pulling the movable one a distance $x=x(t)$, see 
\begin{figure}[t]
\centering
\begin{overpic}[width = 0.7\textwidth, tics = 5]{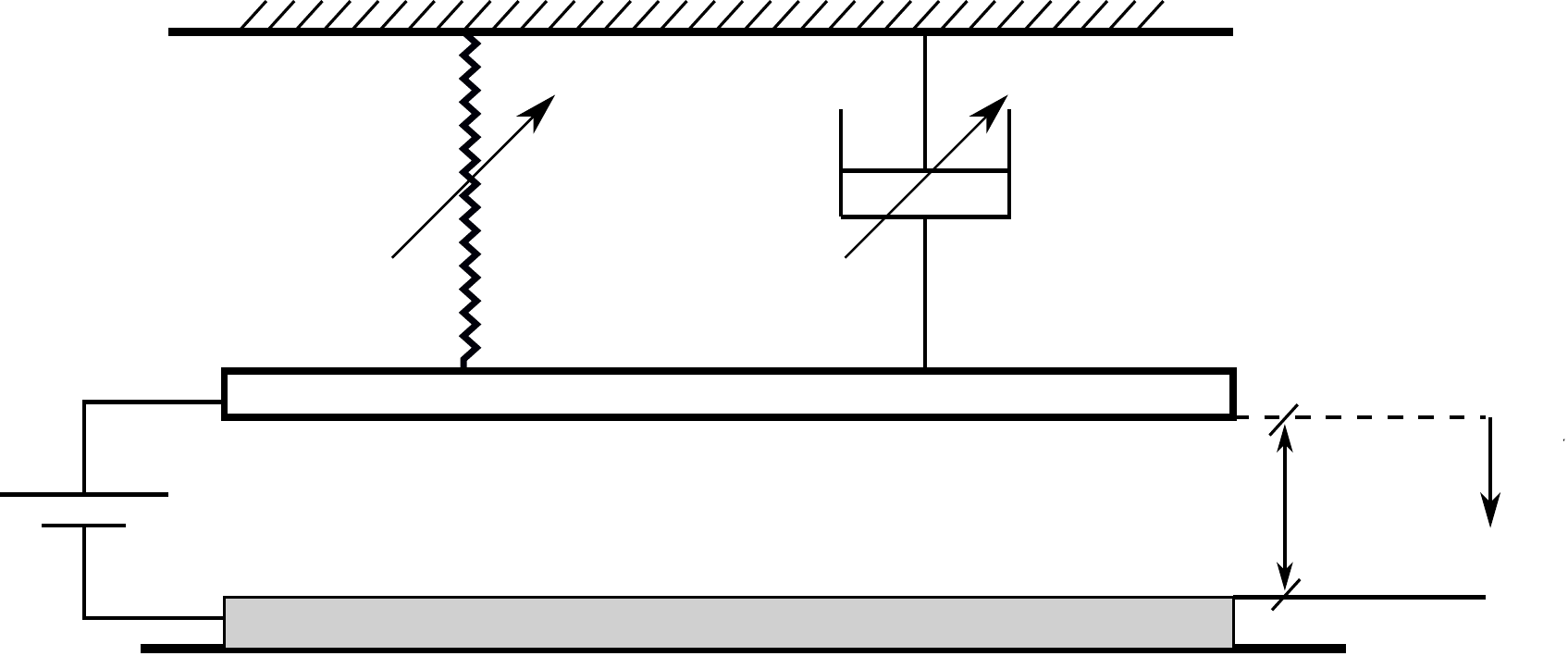}
\put(12,9){$V(t)$}
\put(94,5){$x$}
\put(84.5,8){$1$}
\put(32,28){$f_{R}(x)$}
\put(66,28){$f_{D}(x,\dot{x})$}
\end{overpic}
\caption{Schematic diagram of a parallel plate capacitor with one movable plate, under electrostatic, restoring and damping forces.}
\label{fig:FigNat}
\end{figure}
Figure \ref{fig:FigNat}. Two more forces act over the movable electrode, namely, a restoring force $f_{R}=f_{R}(x)$ and a damping force $f_{D}=f_{D}(x,\dot{x})$ where $\dot{x}=\dot{x}(t)$ is the velocity on the movable electrode. The gravitational force is neglected because it is too small compared with the electrostatic force in micro-structures. 

From the Newton's second law, the acceleration $\ddot{x}=\ddot{x}(t)$ of the movable electrode satisfies the equation
\[
\ddot{x}(t)=f_{E}(t,x(t))+f_{R}(x(t))+f_{D}(x(t),\dot{x}(t)).
\]

In suitable units, the electrostatic force is given by
\[
f_{E}(t,x) = \dfrac{ e\,V^2(t)}{(1-x)^2},
\]
where $e \in \R^{+}$ is a parameter related with the physical and mechanical properties of the actuator such as the facing area of the electrodes and the dielectric constant of the medium between them.
In this document, we assume that the restoring force is given by $f_{R}(x)=-x$. For this kind of device, there are two major types of damping forces \footnote{Other types of damping forces
are known and have been studied, although not as extensively as the mentioned ones. Some examples appear in \cite{ Younis_2011} page 120.} which are 
\begin{align}
    f_{D}(x,\dot{x})&=-c\dot{x}, \hspace{1.7cm}  (\text{linear damping}) \label{eq:linear_damping}\\
    f_{D}(x,\dot{x})&=-\dfrac{\gamma}{(1-x)^3}\dot{x}, \quad (\text{squeeze film damping}) \label{eq:squeeze_film_damping}
\end{align}

with $c,\gamma \in \R^{+}.$ The linear damping force comes from simplifying the problem to a moving sphere in a fluid. Meanwhile, the squeeze-damping force is significantly present in parallel plate actuators that have a proportionally bigger surface area in comparison with the distance between the electrodes. In consequence, the equation of motion of the movable electrode is given by the following Duffing equation
\begin{equation}
\label{eq principal}
\ddot{x}(t) - f_{D}(x(t),\dot{x}(t))+x(t)= \dfrac{e V^2(t)}{(1-x(t))^2}, \qquad x(t)\in]-\infty,1[
\end{equation}
with $f_{D}$ given by \eqref{eq:linear_damping} or \eqref{eq:squeeze_film_damping}. A general review of damping forces in MEMS can be found in \cite{Younis_2011,Zhang_2014} and the references therein. Assuming that $V(t)$ is a  positive, continuous and periodic function and $f_{D}$ is a linear damping force given by \eqref{eq:linear_damping}, the authors in \cite{Llibre_Nunez_Rivera_2018,Gutierrez_Torres_2013} prove analytically the existence of exactly two positive periodic solutions, one stable and the other unstable. Under the same assumptions, recently in \cite{Beron_Rivera} it is shown that the stable periodic solution is actually locally exponentially asymptotically stable with rate of exponential decay $c/2$. In addition, the case when the damping force is given by \eqref{eq:squeeze_film_damping} is also considered in \cite{Beron_Rivera} and the authors prove the existence of at least two periodic solutions, one stable and the other unstable under suitable assumptions.  As in \cite{Beron_Rivera,Nunez_Perdomo_Rivera_2019} throughout this document, we consider a $DC-AC$ voltage $V_{\delta}(t)$ of the form $\displaystyle{V_{\delta}(t)=v_0+\delta v(t),}$ with $v_0 \in \mathbb{R}^{+}$ ($DC$-voltage source), $\delta \in \mathbb{R}_{0}^{+}$ and $v(t)\in C_{T}(\mathbb{R},\mathbb{R})$ with zero average.

\textbf{Delay in MEMS.} Time-delayed MEMS actuators appear in practical control engineering applications. In electrostatic actuators, these time delay phenomena are inherent to the device or generated by design. With the increasing aim of improving the performance of these devices in terms of sensitivity and actuation, their stability properties play a fundamental role, because the global behavior of the device can lead to the undesirable lateral instability effect. Therefore, there is a need for an active control that improves the performance and stability of the actuators. One of the techniques to improve the performance of this type of device is the use of feedback controllers with time delay introduced in \cite{Pyragas_1992}, where it was used to stabilize periodic solutions in chaotic systems. After that, the delayed time in the Pyragas method has been investigated in several cases, as an example,  the works of Younis \textit{et al} 
\cite{Younis-2009, Younis-2010, Younis-2013}  describe the dynamics of a resonant microbeam excited electrically,  the dynamics of delayed feedback MEMS resonators, and the solution for a single degree-freedom resonator model actuated by a $DC$-voltage source and an $AC$-voltage source with a delay feedback controller. For this purpose,   perturbation, multiple scales methods combined with shooting technique and basin of attraction analysis are employed. 
In all the referred works, some analytical support is presented after verifying the results experimentally.

In feedback controllers with time delay, the output signal of this type of controller is a delayed value of the system output from which the current output of the system is subtracted. The delay effect is added to the system by adjusting the voltage signal. Position feedback $x_{d}(t)=x(t-d)$ (see \cite{Younis-2010}) or velocity feedback $\dot{x}_{d}(t)=\dot{x}(t-d)$ (see \cite{Younis-2013}) can be used with the delayed time $d>0$. If we combine those situations, the voltage load is given by
\[
\mathcal{V}(t,\sigma_{1}(x,x_{d},g_{1}),\sigma_{2}(\dot{x},\dot{x}_{d},g_{2}))=V_{\delta}(t)+\sigma_{1}(x,x_{d},g_{1})+\sigma_{2}(\dot{x},\dot{x}_{d},g_{2}),
\]
with
\[
\sigma_{1}(x(t),x_{d}(t),g_{1})=g_{1}(x_{d}(t)-x(t)) \quad \text{and} \quad  \sigma_{2}(\dot{x}(t),\dot{x}_{d}(t),g_{2})=g_{2}(\dot{x}_{d}(t)-\dot{x}(t)).
\]

where $g_{i}\in \R$, $i=1,2$ are the corresponding gains of the controller with respect to the delayed position and delayed velocity respectively. Therefore, under the effects of feedback controllers, the differential equation that governs the motion of the movable electrode in the Nathanson model is given by
\begin{equation}
\label{eq:nathanson_norm feedback}
\ddot{x} -f_{D}(x,\dot{x})+ x = \dfrac{e \mathcal{V}^2(t,\sigma_{1}(x,x_{d},g_1),\sigma_{2}(\dot{x},\dot{x}_{d},g_2))}{(1-x)^2}, \qquad x\in\,]-\infty,1[
\end{equation}

\begin{figure}[t]
\centering
\begin{overpic}[width = 0.6\textwidth, tics = 7]{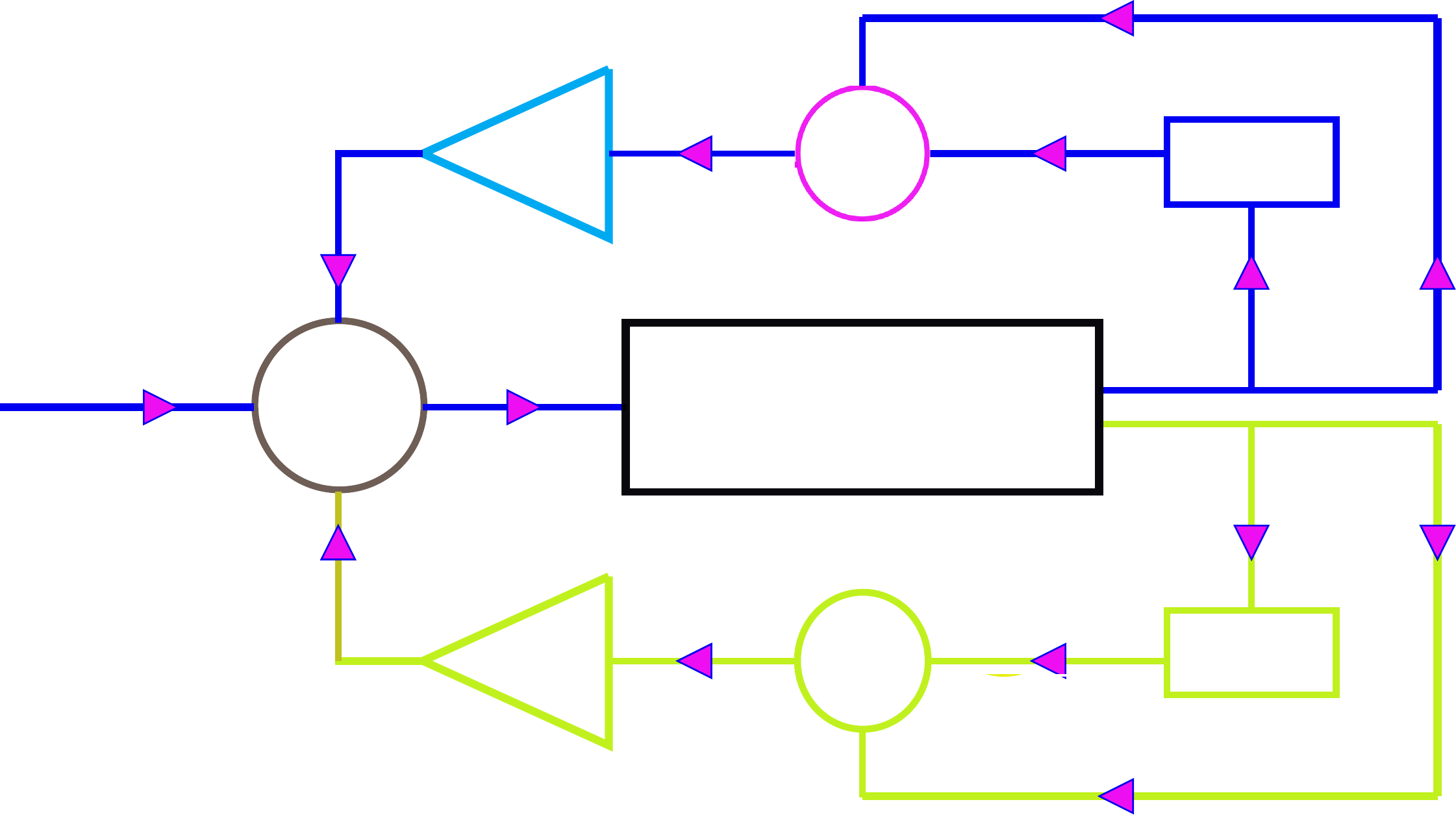}
\put(81.5,43.8){delay}
\put(81.5,10){delay}
\put(47,27){ACTUATOR}
\put(32.2,44){Gain}
\put(57.7,6.5){$-$}
\put(59.5,9.5){$+$}
\put(57.7,47){$-$}
\put(59.5,44.5){$+$}
\put(0.5,31){$V_{\delta}(t)$}
\put(18,27){$+$}
\put(21.5,23.5){$+$}
\put(21.5,30){$+$}
\put(32.2,9){Gain}
\put(65,13){$\dot{x}(t-d)$}
\put(65,41){$x(t-d)$}
\put(82,51){$x(t)$}
\put(82,3){$\dot{x}(t)$}
\end{overpic}
\caption{Diagrammatic representation of two delayed feedback controllers acting over the actuator \eqref{eq principal}. One controller measuring $x(t-d)-x(t)$ and the other measuring $\dot{x}(t-d)-\dot{x}(t).$ }
\label{fig:FigNat delay}
\end{figure}

The main goal of this document is to provide an analytical study of the existence of periodic solutions of \eqref{eq:nathanson_norm feedback}. More precisely, we will show that for any value of $d$ and small values of $g_{1}$ and $g_{2}$, there exist two positive $T$-periodic solutions. Moreover, we prove that the same result holds but now under suitable conditions of $g_{1}$ and $g_{2}$ (not necessarily small values) and small values of $d$. In both scenarios, one periodic solution is locally asymptotically stable and the other unstable. The main tools used in this document are the Affinity principle of Kranoselskii and the Implicit Function Theorem in Banach Spaces. After some direct computations, the techniques and ideas in this document can be applied to study periodic motions in other actuators. For example, torsional actuators and comb-drive devices, atomic force microscope micro-cantilevers (see \cite{Younis_2011,AFM}). Let us remark that, despite the large amount of work devoted to MEMS with delay, as far as we know, this is the first document that presents analytical and numerical results of periodic motions in MEMS under the presence of feedback controllers.  The rest of the paper is organized into three sections. In Section 2 we introduce some recent results about the existence, multiplicity, and stability of periodic solutions for \eqref{eq principal} which corresponds to \eqref{eq:nathanson_norm feedback} with $d=0$. In Section 3 we obtain the main results of this document. Firstly, we provide an explicit interval of delay parameters for the local stability of one of the equilibrium points in the delayed autonomous case ($\delta=0$).  Secondly, we state and prove two results about the existence of periodic solutions of \eqref{eq:nathanson_norm feedback}. Finally, in Section 4 we perform some numerical validations of the analytical results presented in Section 3.  

\section*{2. Preliminary results}

From now on and without loss of generality, we move the singularity $x=1$ in \eqref{eq:nathanson_norm feedback} to $x=0$ performing the change of variable $x\to 1-x $. After a slight abuse of notation, the equivalent equation can be written in the form
\begin{equation}
\label{eq:nathanson_norm feedback 2}
\ddot{x}+h_{D}(x,\dot{x})+ x =1- \dfrac{e \mathcal{V}^2(t,s_{1}(x,x_{d},g_1),s_{2}(\dot{x},\dot{x}_{d},g_2))}{x^2}, \qquad x\in\,]0,\infty[
\end{equation}
where
\[
h_{D}(x,\dot{x})=f_{D}(1-x,-\dot{x}), \quad \text{and} \quad s_{1}(x,x_{d},g_1)=g_{1}(x-x_{d}), \quad s_{2}(\dot{x},\dot{x}_{d},g_2)=g_{2}(\dot{x}-\dot{x}_{d}).
\]

Let $\displaystyle{X:]0,\infty[\times \mathbb{R} \to \mathbb{R}^2, t\to X(t)=(x(t),\dot{x}(t))^{tr}}$. Then, $X(t)$ satisfies the non-autonomous delayed system
\begin{equation}
\label{system nathanson_norm feedback}
\dot{X}(t)=F(t,X(t),X_{d}(t)), \qquad F(t,X,X_{d})=\begin{pmatrix}
\dot{x}\\
1-\frac{e \mathcal{V}^2(t,s_{1}(x,x_{d},g_1),s_{2}(\dot{x},\dot{x}_{d},g_2))}{x^2}-x-h_{D}(x,\dot{x})
\end{pmatrix}
\end{equation}
where $X_{d}(t)=X(t-d)$ and $F:\mathbb{R}\times \Omega \times \Omega \to \R^2$, with $\displaystyle{\Omega=\left\{(x,\dot{x})\in \mathbb{R}^2: x>0\right\}.}$
\subsection*{Non-feedback controller} As a fundamental part of this document, we start our study by recalling some known results about the dynamics of the system
\begin{equation}
\label{system nathanson_norm non-feedback}
\dot{X}(t)=F(t,X(t)), \qquad F(t,X)=\begin{pmatrix}
\dot{x}\\
1-\frac{e V_{\delta}^2(t)}{x^2}-x-h_{D}(x,\dot{x})
\end{pmatrix},
\end{equation}
that corresponds to the Nathanson model without feedback controllers. Let's start our analysis assuming that the voltage load is constant, i.e. $\delta=0$, then $V_{0}(t)=v_{0}$ for all $t\in \mathbb{R}$ ($DC$-voltage), therefore we have the autonomous system
\begin{equation}\label{autonomous system}
\dot{X}(t)=F(X), \qquad F(X)=\begin{pmatrix}
\dot{x}\\
1-\frac{e v^2_{0}}{x^2}-x-h_{D}(x,\dot{x})
\end{pmatrix}
\end{equation}
The equilibrium points of \eqref{autonomous system}  are given by  $(x_{\ast},0)$ where $x_{\ast}$ is any solution of the the nonlinear equation
\begin{equation}\label{equilibrium points}
h_{D}(x,0)+x=1-\dfrac{e v_0^2}{x^2}, \quad \Leftrightarrow \quad (1-x)x^{2}=e v_{0}^{2}.
\end{equation}
From the results in \cite{Beron_Rivera,Nunez_Perdomo_Rivera_2019} direct computations prove the following proposition.
\begin{prop} \label{Prop 1} Assume that $0<v_{0}< \frac{2}{9}\sqrt{\frac{3}{e}}$. Then the system \eqref{autonomous system} admits exactly two equilibrium points $(x_{1},0)$, $(x_{2},0)$ such that  
\begin{equation}
0<x_{1}<2/3<x_{2}<1.
\end{equation}
Moreover, the linear system at the equilibrium $(x_{\ast},0)$ with $x_{\ast} \in \left\{x_{1},x_{2}\right\}$ is given by
\begin{equation}\label{linear system}
\dot{X}(t)=DF(X_{\ast})X, \qquad DF(X_{\ast})=\begin{pmatrix}
0 & 1\\
\frac{2-3x_{\ast}}{x_{\ast}} & -\partial_{\dot{x}}h_{D}(x_{\ast},0)
\end{pmatrix}
\end{equation}

Therefore, the equilibrium point $(x_{1},0)$ is always a saddle equilibrium point, meanwhile $(x_{2},0)$ is not a center (i.e., there are not a pair of pure imaginary complex conjugate eigenvalues of $DF(x_{2},0)$. Moreover, $(x_{2},0)$ is a stable spiral equilibrium point if 
\[
\begin{split}
\frac{c^2}{4}&<\frac{3x_{2}-2}{x_2} \quad \text{if $f_{D}(x,\dot{x})$ is a linear damping given by \eqref{eq:linear_damping}}\\
\frac{\gamma^2}{4}&<(3x_{2}-2)x^5_{2} \quad \text{if $f_{D}(x,\dot{x})$ is a squeeze film damping given by \eqref{eq:squeeze_film_damping}}
\end{split}
\]
\end{prop}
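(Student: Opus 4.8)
The plan is to treat the three assertions in turn, each reducing to an elementary computation once the correct normal form is in hand.

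First I would count the equilibria. By \eqref{equilibrium points} the abscissae $x_*$ of the equilibria are exactly the roots in $]0,1[$ of $P(x)=e v_0^2$, where $P(x)=(1-x)x^2=x^2-x^3$. Since $P(0)=P(1)=0$ and $P'(x)=x(2-3x)$, the function $P$ increases on $]0,2/3[$ and decreases on $]2/3,1[$, attaining its unique maximum $P(2/3)=4/27$ at $x=2/3$. Hence for every level $0<e v_0^2<4/27$ the equation $P(x)=e v_0^2$ has exactly two roots, one in $]0,2/3[$ and one in $]2/3,1[$, and none for larger levels. The hypothesis $0<v_0<\tfrac{2}{9}\sqrt{3/e}$ is precisely $e v_0^2<4/27$, which yields the two equilibria $x_1\in\,]0,2/3[$ and $x_2\in\,]2/3,1[$ with $0<x_1<2/3<x_2<1$.

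Next I would compute the linearisation. The Jacobian of $F$ in \eqref{autonomous system} is
\[
DF(x,\dot x)=\begin{pmatrix} 0 & 1\\[2pt] \tfrac{2 e v_0^2}{x^3}-1-\partial_x h_D(x,\dot x) & -\partial_{\dot x}h_D(x,\dot x)\end{pmatrix}.
\]
The key observation is that both admissible damping forces are linear in the velocity: after the substitution $h_D(x,\dot x)=f_D(1-x,-\dot x)$ one obtains $h_D(x,\dot x)=c\,\dot x$ in the linear case \eqref{eq:linear_damping} and $h_D(x,\dot x)=\g\,\dot x/x^3$ in the squeeze case \eqref{eq:squeeze_film_damping}; in both situations $h_D(x,\dot x)=\phi(x)\dot x$, so that $h_D(x,0)=0$ and $\partial_x h_D(x,0)=\phi'(x)\cdot 0=0$, while $\partial_{\dot x}h_D(x,0)=\phi(x)$. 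Evaluating at $\dot x=0$ and using $e v_0^2=(1-x_*)x_*^2$ gives $\tfrac{2e v_0^2}{x_*^3}-1=\tfrac{2(1-x_*)}{x_*}-1=\tfrac{2-3x_*}{x_*}$, which is exactly the $(2,1)$ entry of \eqref{linear system}; the $(2,2)$ entry is $-\partial_{\dot x}h_D(x_*,0)$, as claimed.

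Finally I would classify the equilibria from the characteristic polynomial of \eqref{linear system}. Writing $a=\tfrac{2-3x_*}{x_*}$ and $b=-\partial_{\dot x}h_D(x_*,0)$, one has $\operatorname{tr}DF(X_*)=b$ and $\det DF(X_*)=-a$, so the eigenvalues are $\lambda_{\pm}=\tfrac12\bigl(b\pm\sqrt{b^2+4a}\bigr)$. For $x_1\in\,]0,2/3[$ we have $a>0$, hence $\det DF<0$ and the eigenvalues are real of opposite sign: $(x_1,0)$ is a saddle. For $x_2\in\,]2/3,1[$ we have $a<0$ and $\det DF>0$; moreover $b=-\phi(x_2)<0$ strictly, because $c,\g>0$, so the trace never vanishes and a purely imaginary pair is impossible, i.e. $(x_2,0)$ is not a center. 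A stable spiral at $x_2$ requires non-real eigenvalues with negative real part; since $\operatorname{Re}\lambda_{\pm}=b/2<0$ holds automatically, it suffices that the discriminant be negative, $b^2+4a<0$, i.e. $b^2<\tfrac{4(3x_2-2)}{x_2}$. Substituting $b=-c$ gives $\tfrac{c^2}{4}<\tfrac{3x_2-2}{x_2}$, and $b=-\g/x_2^3$ gives $\tfrac{\g^2}{4}<(3x_2-2)x_2^5$, recovering the two stated conditions. The computations are elementary throughout, so there is no serious analytic obstacle; the only place demanding care is the bookkeeping of signs through the change of variable $x\mapsto 1-x$ together with the verification that $\partial_x h_D(x_*,0)=0$, which is exactly what leaves the $(2,1)$ entry of \eqref{linear system} free of any damping contribution.
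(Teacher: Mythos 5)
Your proof is correct and follows exactly the route the paper intends: the paper itself offers no written proof, delegating to ``direct computations'' from \cite{Beron_Rivera,Nunez_Perdomo_Rivera_2019}, and your argument supplies precisely those computations --- the monotonicity analysis of $(1-x)x^2$ with maximum $4/27$ at $x=2/3$, the Jacobian evaluation using $ev_0^2=(1-x_\ast)x_\ast^2$ and $\partial_x h_D(x_\ast,0)=0$, and the trace--determinant--discriminant classification yielding the saddle, the non-center, and the two spiral conditions. The sign bookkeeping through $h_D(x,\dot x)=f_D(1-x,-\dot x)$ is handled correctly in both damping cases, so nothing is missing.
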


\begin{remark}
The threshold value $v^{\ast}_{0}=\frac{2}{9}\sqrt{\frac{3}{e}}$ is called ``pull-in voltage'' and corresponds to the theoretical voltage value when the electrostatic force overcomes a restoring force, inducing the collapse of the two electrodes. In this situation, the device reaches a structural instability phenomenon known as the \textit{pull-in} instability effect. See \cite{Younis_2011,Zhang_2014}
\end{remark}

Concerning the existence of periodic solutions of \eqref{system nathanson_norm non-feedback} we refer to the results in \cite{Gutierrez_Torres_2013, Nunez_Perdomo_Rivera_2019,Beron_Rivera} in which the authors use the method of lower and upper solutions for second order differential equations. Here we  recall the main result in \cite{Beron_Rivera}
\begin{theorem}\label{Periodic solutions existence}
Let us consider the system \eqref{system nathanson_norm non-feedback} and assume that $V_{\delta}=V_{\delta}(t)$ satisfies the condition
\[
0 < V^{2}_{\delta,\min}<V^{2}_{\delta,\max}\leq (v^{\ast}_{0})^{2}=\dfrac{4}{27e}.
\]
where $V^{2}_{\delta,\min}$ and $V^{2}_{\delta,\max}$ denote the minimum and maximum value of $V^{2}_{\delta}$. Let $\xi_{i},\eta_{i}$, $i=1,2$ be constant upper and lower  solutions in $]0,1[$ of \eqref{system nathanson_norm non-feedback} given as solutions of 
\[
(1-x)x^{2}=e V^{2}_{\delta,\max} \quad \text{and} \quad (1-x)x^{2}=e V^{2}_{\delta,\min},
\]
respectively, which satisfy the inequalities
\[
0<\eta_{1}\leq \xi_{1}<2/3<\xi_{2}\leq \eta_{2}<1.
\]
In consequence, 
\begin{enumerate}
    \item In the case $h_{D}(x,\dot{x})=c\dot{x}$ we have:
\begin{itemize}
\item[$\triangleright$] There exists a $T$-periodic solution $\displaystyle{\Psi_{1}(t)=\begin{pmatrix}
\psi_{1}(t)\\
\dot{\psi}_{1}(t)
\end{pmatrix}}$ of \eqref{system nathanson_norm non-feedback} such that 
\[
\eta_{1}<\psi_{1}(t)<\xi_{1}, \quad \forall t \in \mathbb{R}.
\]
\item[$\triangleright$] If $1-2e V_{\delta}^{2}(t)<\Big(\dfrac{\pi}{T}\Big)^{2}+\dfrac{c^2}{4}$ for all $t\in\, [0,T]$,  then there exists a $T$-periodic solution
$\displaystyle{\Psi_{2}(t)=\begin{pmatrix}
\psi_{2}(t)\\
\dot{\psi}_{2}(t)
\end{pmatrix}}$ of \eqref{system nathanson_norm non-feedback} such that
\[
\xi_{2}<\psi_{2}(t)<\eta_{2}, \quad \forall t \in \mathbb{R}.
\]
Furthermore, the periodic solution $\Psi_{1}(t)$ is unstable, meanwhile the periodic solution $\Psi_{2}(t)$ is locally stable. Moreover, if $\displaystyle{c^2/4<(3\xi_{2}-2)/\xi_{2}}$ then $\Psi_{2}(t)$ is locally exponentially asymptotically stable with rate of exponential decay $c/2$. Finally, the only $T$-periodic solutions of \eqref{system nathanson_norm non-feedback} with positive first components are precisely $\Psi_{1}(t)$ and $\Psi_{2}(t)$.
\end{itemize}
\vspace{0.5 cm}

\item In the case $h_{D}(x,\dot{x})=\dfrac{\gamma}{x^3}\dot{x}$ we have:
\begin{itemize}
\item[$\triangleright$] There exists a $T$-periodic solution $\Phi_{1}(t)=\begin{pmatrix}
\phi_{1}(t)\\
\dot{\phi}_{1}(t)
\end{pmatrix}$ of \eqref{system nathanson_norm non-feedback} such that
\[
\eta_{1}<\phi_{1}(t)<\xi_{1}, \quad \forall t \in \mathbb{R}.
\]

\item[$\triangleright$] Define the parameters
\[
N=\dfrac{\gamma}{\eta^{3}_{2}}, \quad \hat{a}=\max\left\{N,\eta_{2}-\xi_{2}\right\},\quad \text{and} \quad M=\dfrac{3\gamma R}{\eta^{4}_{2}}+\dfrac{3\eta_{2}-2}{\eta_{2}},
\]
where $R$ the unique positive solution of $\displaystyle{R-\ln(R+1)=\hat{a}(\eta_{2}-\xi_{2}).}$ Assume that the following conditions hold
\[
M \leq (\pi/T)^2 \quad \text{and} \quad N \leq H(L_*) = \dfrac{L_* - M}{\sqrt{L_*}}\cot \left(\dfrac{T\sqrt{L_*}}{2}\right),
\]
with $L_* \in [M, (\pi/T)^2]$ the solution of
\[
\sin(T\sqrt{L_*}) = T\sqrt{L_*}\left(\dfrac{L_*-M}{L_*+M}\right).
\]
Then, there exists a $T$-periodic solution $\displaystyle{\Phi_{2}(t)=\begin{pmatrix}
\phi_{2}(t)\\
\dot{\phi}_{2}(t)
\end{pmatrix}}$ of \eqref{system nathanson_norm non-feedback} such that
\[
\xi_{2}<\phi_{2}(t)<\eta_{2}, \quad \forall t \in \mathbb{R}.
\]

\end{itemize}
\end{enumerate}
\end{theorem}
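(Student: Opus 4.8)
The plan is to rewrite the scalar equation behind \eqref{system nathanson_norm non-feedback} as the $T$-periodic problem $\ddot{x}+h_{D}(x,\dot{x})=g(t,x)$ with $g(t,x)=1-eV_{\delta}^{2}(t)/x^{2}-x$, and to deploy the method of upper and lower solutions. The starting point is to test constants: $x\equiv a$ is a lower solution exactly when $eV_{\delta}^{2}(t)\ge(1-a)a^{2}$ for all $t$, i.e. $eV^{2}_{\delta,\min}\ge(1-a)a^{2}$, and an upper solution exactly when $eV^{2}_{\delta,\max}\le(1-a)a^{2}$. Because the cubic $(1-x)x^{2}$ increases on $]0,2/3[$ and decreases on $]2/3,1[$ with maximum $4/27$ at $x=2/3$, the roots of $(1-x)x^{2}=eV^{2}_{\delta,\max}$ and $(1-x)x^{2}=eV^{2}_{\delta,\min}$ organize themselves into a well-ordered pair on the left branch, $\eta_{1}\le\xi_{1}$ (lower below upper), and a reversed pair on the right branch, $\xi_{2}\le\eta_{2}$ (upper below lower). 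This dichotomy is the structural core of the proof and forces the two periodic solutions to be produced by two genuinely different mechanisms.

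For $\Psi_{1}$ and $\Phi_{1}$ I would invoke the classical well-ordered upper and lower solutions theorem on the strip $\eta_{1}\le x\le\xi_{1}$. Since $x$ is bounded away from $0$ and $1$ there, $g$ and its derivatives are bounded, and a Nagumo growth condition controlling $\dot{x}$ holds automatically in the linear case and is easily checked for the squeeze-film term $\gamma\dot{x}/x^{3}$; the theorem then delivers a $T$-periodic solution with $\eta_{1}\le\psi_{1}\le\xi_{1}$, the strict inequalities following from the maximum principle since the extremal constants solve the equation only where $V_{\delta}$ attains its extrema. This is the routine half of the existence statement.

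The delicate half is $\Psi_{2}$ and $\Phi_{2}$, where $\eta_{2}$ (lower) lies above $\xi_{2}$ (upper): this is the non-well-ordered regime, in which the existence of the pair alone is insufficient and a non-resonance condition must be imposed. Here I would use a De Coster--Habets type degree argument, computing the Leray--Schauder degree of the fixed-point operator on the set of functions with range in $[\xi_{2},\eta_{2}]$ and showing it is nonzero. Writing the problem as $\ddot{x}+c\dot{x}+\widetilde{f}(t,x)=0$ with $\widetilde{f}=-g$, one has $\partial_{x}\widetilde{f}=1-2eV_{\delta}^{2}(t)/x^{3}<1-2eV_{\delta}^{2}(t)$ on the strip (using $x<1$); the hypothesis $1-2eV_{\delta}^{2}(t)<(\pi/T)^{2}+c^{2}/4$ then keeps the slope strictly below the threshold at which the transformed Hill equation $\ddot{z}+(q-c^{2}/4)z=0$ ceases to be disconjugate on $[0,T]$, which is exactly the non-resonance condition guaranteeing the degree is nontrivial. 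For the squeeze-film version the state-dependent damping must first be tamed: I expect $R$ (defined by $R-\ln(R+1)=\hat{a}(\eta_{2}-\xi_{2})$) to be a Nagumo-type a priori bound on $|\dot{x}|$ obtained by integration, $N=\gamma/\eta_{2}^{3}$ the effective damping coefficient, and $M$ a damping-corrected stiffness; the conditions $M\le(\pi/T)^{2}$ and $N\le H(L_{*})$ then play the combined role of non-resonance and positivity of the Green function for the effective constant-coefficient operator. Carrying out this degree computation in the reversed-order regime is the main obstacle.

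It remains to address stability and uniqueness in the linear case. Linearizing along a periodic solution $\psi$ gives $\ddot{y}+c\dot{y}+q(t)y=0$ with $q(t)=1-2eV_{\delta}^{2}(t)/\psi(t)^{3}$, whose sign is governed by whether $\psi$ lives below or above $2/3$: along $\psi_{1}$ one has $q<0$, so the Hill equation possesses a Floquet multiplier larger than $1$ and $\Psi_{1}$ is unstable, whereas along $\psi_{2}$ one has $q>0$ and the damping $c>0$ yields asymptotic stability. To obtain the exponential rate I would substitute $y=e^{-ct/2}z$ to reach $\ddot{z}+(q-c^{2}/4)z=0$; the hypothesis $c^{2}/4<(3\xi_{2}-2)/\xi_{2}$ ensures $q-c^{2}/4>0$, while the existence condition forces $q-c^{2}/4<(\pi/T)^{2}$, placing the transformed equation in the stability (disconjugacy) band, so $z$ stays bounded and $y$ decays like $e^{-ct/2}$. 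Finally, uniqueness follows from the a priori confinement of any positive $T$-periodic solution to one of the two strips together with a monotonicity/convexity argument on $x\mapsto g(t,x)$ ruling out a third branch, integrating the equation over a period to pin down the admissible mean values.
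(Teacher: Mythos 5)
First, a point of comparison: the paper itself contains no proof of this statement --- Theorem \ref{Periodic solutions existence} is recalled verbatim from \cite{Beron_Rivera}, where the method is the one you chose, lower and upper solutions. Your structural analysis matches that source: the constants $\eta_1\le\xi_1$ form a well-ordered (lower, upper) pair giving $\Psi_1,\Phi_1$ by the classical theorem (Nagumo being harmless since both damping terms are linear in $\dot x$ on a strip with $x$ bounded away from $0$), while $\xi_2\le\eta_2$ is a reversed-order pair requiring the non-resonance condition $1-2eV_\delta^2(t)<(\pi/T)^2+c^2/4$; and your damped-Hill reduction $y=e^{-ct/2}z$ with the Lyapunov--Borg band is literally the computation this paper performs later, in Lemma \ref{lema hill}, for the delayed continuation. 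So the existence architecture and the exponential-decay argument are essentially right.

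There are, however, two genuine gaps. (1) Instability of $\Psi_1$: you assert that $q(t)=1-2eV_\delta^2(t)/\psi_1^3(t)<0$ pointwise because $\psi_1<\xi_1<2/3$. That inference is only valid at the autonomous equilibrium, where $q=(3x_1-2)/x_1<0$; along a time-dependent solution the available bound is $q(t)\le 1-2eV^2_{\delta,\min}/\xi_1^3=1-2(1-\eta_1)\eta_1^2/\xi_1^3$, which becomes \emph{positive} when $V^2_{\delta,\min}\ll V^2_{\delta,\max}$ (i.e. $\eta_1\ll\xi_1$), so your Floquet argument collapses outside a perturbative regime. The cited proofs obtain instability differently: the linearization at $\Psi_1$ is nondegenerate (this is the content of Remark \ref{non nontrivial periodic sol}) and the fixed-point index of the Poincar\'e map at $\Psi_1$ equals $-1$, whence instability by Ortega's index theory \cite{Ortega_1990} --- exactly the mechanism the present paper reuses in Section 3 for the delayed problem. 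Relatedly, your blanket claim that ``$q>0$ and $c>0$ yield asymptotic stability'' of $\Psi_2$ is false as stated (parametric resonance is possible); the correct statement needs the two-sided band $c^2/4<(3\xi_2-2)/\xi_2\le q(t)$ and $q(t)-c^2/4<(\pi/T)^2$, which is precisely the role of the extra hypothesis and of the existence condition, as your own exponential-rate computation shows. (2) Exact multiplicity: ``a monotonicity/convexity argument \ldots integrating over a period'' is not a proof. The operative fact is the strict concavity in $x$ of $g(t,x)=1-x-eV_\delta^2(t)/x^2$ (indeed $\partial_{xx}g=-6eV_\delta^2(t)/x^4<0$), which by the standard argument for dissipative Duffing equations with concave nonlinearity caps the number of positive $T$-periodic solutions at two; that argument needs to be run, not gestured at. Finally, for $\Phi_2$ you explicitly guess the roles of $R$, $N$, $M$ and $H(L_*)$; as written that half of item (2) is an ansatz rather than an argument, and it is exactly where the technical work of \cite{Beron_Rivera} lies.
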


\begin{remark}\label{non nontrivial periodic sol}
An important consequence of the condition $\displaystyle{1-eV_{\delta}^{2}(t)<\Big(\dfrac{\pi}{T}\Big)^{2}+\dfrac{c^2}{4}}, \, \forall t\in \mathbb{R}$ is that the linearized system at $\Psi_{1}(t)$ given by
\[
\dot{X}(t)=\begin{pmatrix}
0& 1 \\
\frac{2eV^{2}(t)}{\psi^{3}_{1}(t)}-1& -c
\end{pmatrix}X(t),
\]
has no nontrivial $T$-periodic solutions. The same conclusion is true for $\Psi_{2}(t)$. The proof of this statement can be deduced from the results obtained in \cite{Fitzpatrick}
\end{remark}

\subsection*{Affinity principle of Krasnoselskii} 
Consider the delayed system
\begin{equation}\label{delayed system}
\dot{X}(t)=F(X(t),X(t-d)),
\end{equation}
where $F:\overline{\Omega}\times \overline{\Omega}\to \mathbb{R}^{m}$ is a continuously differentiable function. We shall assume that the point $X_{\ast}\in \Omega$ is an equilibrium point of  \eqref{delayed system}, i.e., $F(X_{\ast},X_{\ast})=0$. Let
\[
A=D_{X}F(X_{\ast},X_{\ast}) \quad \text{and} \quad B=D_{Y}F(X_{\ast},X_{\ast}),
\]
and consider the linear delayed system 
\begin{equation}\label{linear delayed system}
\dot{X}(t)=AX(t)+BX(t-d),
\end{equation} 
corresponding to the linearisation at the equilibrium $X_{\ast}$ of the system 
\eqref{delayed system}. 
It is worth noticing that the Poincar\'e map associated to system \eqref{delayed system} is defined over the infinite-dimensional Banach space $C[-d,0]$, which makes 
the computation of its (Leray-Schauder) index is more difficult than in the non-delayed case. 
Observe, in the first place, that the compactness of $P$ holds only if $d\le T$; however, this assumption is not enough to make the computation trivial. 
The effort is considerably smaller when taking into account a simple version of the Krasnoselskii affinity principle, adapted to our context from \cite{Amster-Epstein}. 
To this end, let $C_T:=C_T(\mathbb R, \mathbb R^m)$ be the space of continuous $T$-periodic
define the (compact) linear operator  $K:C_T\to C_T$ given by $KY:=X$, where $X\in C_T$ is the unique solution of the problem $X'(t) - X(t)= (A-I)Y(t) + BY(t-d)$. 
Roughly speaking, it is seen that the Leray-Schauder degree of $I-P$ over an arbitrary ball $B_r(0)\subset C[-d,0]$
coincides up to a sign with the degree of $I-K$ over $B_r(0)\subset C_T$.

\begin{theorem}[Affinity principle of Krasnoselskii] Let $d\le T$ and assume 
that \eqref{linear delayed system} has no non-trivial $T$-periodic solutions. Then 
$$\deg(I-K,B_r(0),0)= (-1)^m\deg(I-P,B_r(0),0).
$$

\end{theorem}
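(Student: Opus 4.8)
The plan is to turn both sides into parity counts of eigenvalues and to pin down that parity by comparison with the non-delayed case. First I would record that both maps have the form identity minus compact. Writing $D=\tfrac{d}{dt}$ and $(S_dY)(t)=Y(t-d)$, the operator $K$ factors as $K=(D-I)^{-1}(A-I+BS_d)$, where $(D-I)^{-1}\colon C_T\to C_T$ is the solution operator of $W'-W=g$; this is compact because it gains a derivative (Arzel\`a--Ascoli), so $K$ is compact. For $P$ the hypothesis $d\le T$ is exactly what makes the monodromy operator compact, since after one period the solution segment is $C^1$ on $[-d,0]$. A one-line computation gives $KY=Y\iff Y'=AY+BY_d$, so $\ker(I-K)$ is the space of $T$-periodic solutions of \eqref{linear delayed system}, while $\mathrm{Fix}(P)$ is the set of their initial segments; by hypothesis both are trivial. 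Hence, by the Fredholm alternative, $I-K$ and $I-P$ are invertible, and by the Leray--Schauder formula $\deg(I-K,B_r(0),0)=(-1)^{\beta_K}$ and $\deg(I-P,B_r(0),0)=(-1)^{\beta_P}$, where $\beta_\bullet$ denotes the total algebraic multiplicity of the real eigenvalues lying in $(1,\infty)$.

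\emph{Locating the spectra.} Next I would identify these eigenvalues. Writing $KY=\mu Y$ as $\mu Y'=(A+(\mu-1)I)Y+BY_d$ and expanding a $T$-periodic $Y$ into Fourier modes $e^{i\omega_k t}$ with $\omega_k=2\pi k/T$, a nontrivial mode forces
\[
\det\!\big[(\mu(i\omega_k-1)+1)I-A-Be^{-i\omega_k d}\big]=0,
\]
which determines the eigenvalues $\mu$ of $K$. The eigenvalues of $P$ are the Floquet multipliers $e^{\lambda T}$, where $\lambda$ solves $\det(\lambda I-A-Be^{-\lambda d})=0$. Since $A$ and $B$ are real, both spectra are invariant under complex conjugation; non-real eigenvalues therefore occur in conjugate pairs and cancel in $\beta_K,\beta_P\pmod 2$, so that only real eigenvalues in $(1,\infty)$ influence the two degrees.

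\emph{The reference computation.} The heart of the matter is to prove $\beta_K-\beta_P\equiv m\pmod 2$, which I would do by deforming $(A,B,d)$ to the non-delayed reference $B=0$ and computing there. At $B=0$ the nonzero spectrum of $P$ equals $\sigma(e^{AT})$, so $\deg(I-P)=\mathrm{sign}\,\det(I-e^{AT})$, whose sign equals $(-1)^{r_+}$ with $r_+$ the number of positive real eigenvalues of $A$ counted with multiplicity; while the eigenvalues of $K$ are $\mu_{j,k}=(1-\rho_j)/(1-i\omega_k)$, with $\rho_j$ the eigenvalues of $A$, and their real members in $(1,\infty)$ reduce (after conjugate cancellation) to those arising from real $\rho_j<0$ at $k=0$, giving $(-1)^{r_-}$ with $r_-$ the number of negative real eigenvalues. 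Hence $\beta_K-\beta_P\equiv r_-+r_+=\#\{\text{real eigenvalues of }A\}\equiv m\pmod 2$, the last congruence because the complex eigenvalues of the real matrix $A$ pair off. This is precisely the classical ODE relation and fixes the ratio of the two degrees at $(-1)^m$ on the reference.

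\emph{The obstacle.} The genuinely delicate point is to transport this identity from the reference to the given $(A,B,d)$, since along a deformation the operators $I-K$ and $I-P$ may become singular. The structural fact that makes this feasible is that $1\in\sigma(K)\iff 1\in\sigma(P)$, each being equivalent to the existence of a nontrivial $T$-periodic solution of the (parameter-dependent) linear system, so the two degrees can only jump simultaneously; what must still be controlled is that at each crossing the real eigenvalues of $K$ and of $P$ passing through $1$ have multiplicities of equal parity, so that the ratio $\deg(I-K)/\deg(I-P)$ is preserved and remains $(-1)^m$. I expect this crossing analysis --- carried out via the sign of the derivative of the characteristic determinant at a simple multiplier equal to $1$, after a transversality reduction to simple crossings --- to be the main difficulty. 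Alternatively, and more economically, one may avoid the homotopy altogether by casting the statement as the instance of the abstract Krasnoselskii affinity principle of \cite{Amster-Epstein} obtained once the compactness of $K$ and the nondegeneracy $\ker(I-K)=0$ established above are verified.
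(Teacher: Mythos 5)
Your setup is sound and your reference computation at $B=0$ is correct: $K$ and $P$ are compact under $d\le T$, the fixed points of both operators reduce to the $T$-periodic solutions of \eqref{linear delayed system}, the Leray--Schauder index formula applies, and the mode-by-mode identification of the real eigenvalues in $(1,\infty)$ (only the $k=0$ mode survives after conjugate cancellation, giving $(-1)^{r_-}$ for $K$ against $(-1)^{r_+}$ for $e^{AT}$, whence the factor $(-1)^{r_-+r_+}=(-1)^m$) is the classical non-delayed relation. But the transport step is a genuine gap, and it is not a deferrable technicality: it \emph{is} the theorem. Along any natural deformation to the reference (shrinking $d$ to $0$, or interpolating $sBX(t-d)+(1-s)BX(t)$), parameters in the exceptional set of Remark \ref{countable} cannot in general be avoided, so both degrees will jump; your observation that $1\in\sigma(K)\iff 1\in\sigma(P)$ only guarantees the jumps are \emph{simultaneous}, not that they have equal parity. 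Establishing equal parity at a crossing requires comparing the algebraic multiplicity of the eigenvalue $1$ of $K$ (a sum over the Fourier modes $k$ at which $h_k(d)=0$) with that of the Floquet multiplier $1$ of $P$ (a sum over characteristic roots $\lambda$ with $e^{\lambda T}=1$) --- and once one can make that local comparison, one could run it directly at the original parameters and dispense with the homotopy altogether; in other words, the ``crossing analysis'' you postpone is a reformulation of the affinity principle itself. The proposed transversality reduction to simple crossings is also not free in this setting: the perturbation must stay within real $(A,B,d)$, where nonreal characteristic roots move in conjugate pairs and can cross the frequencies $2k\pi i/T$ in pairs, which is exactly the degeneracy the reduction would need to rule out or handle.

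Your ``more economical'' alternative --- verify compactness of $K$ and $P$ and the nondegeneracy $\ker(I-K)=\{0\}$, then invoke the abstract affinity principle of \cite{Amster-Epstein} --- is in fact precisely the paper's treatment: the theorem is presented there as an adaptation from that reference, with no self-contained proof given. So as a citation-based argument your proposal matches the paper and closes, but the homotopy proof you actually sketch remains incomplete at its decisive step.
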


As a corollary, the latter degree is obtained straightforwardly from the computations in \cite{Amster-Kuna}.

\begin{prop}\label{grado-poinc}
Assume that \eqref{linear delayed system} has no non-trivial $T$-periodic solutions $(T\geq d)$. Then the Poincaré operator 
$P:C[-d,0]\to C[-d,0]$ satisfies
\[
\deg(I-P,B_{r}(0),0)=(-1)^m sgn(\det (A+B)).
\]
\end{prop}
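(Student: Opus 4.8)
The plan is to combine the Krasnoselskii affinity principle stated above with a direct spectral computation of $\deg(I-K,B_r(0),0)$, where $K:C_T\to C_T$ is the compact linear operator introduced just before that theorem. First I would record that the hypothesis is exactly what makes everything well posed: a function $Y\in C_T$ is a fixed point of $K$ if and only if $Y'=AY+BY(\cdot-d)$, i.e. $Y$ is a $T$-periodic solution of \eqref{linear delayed system}; since by assumption the only such solution is trivial, $1$ is not an eigenvalue of $K$, so $I-K$ is invertible and the degree is defined. By the affinity principle (applicable because $d\le T$), $\deg(I-K,B_r(0),0)=(-1)^m\deg(I-P,B_r(0),0)$, and since $(-1)^m$ is its own inverse it suffices to prove $\deg(I-K,B_r(0),0)=\mathrm{sgn}(\det(A+B))$.

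For a compact linear operator with $I-K$ invertible, the Leray--Schauder index formula gives $\deg(I-K,B_r(0),0)=(-1)^{\nu}$, where $\nu$ is the total algebraic multiplicity of the real eigenvalues of $K$ lying in $(1,\infty)$. To identify these eigenvalues I would exploit that $K$ commutes with the time-translation group $\tau_s:Y(\cdot)\mapsto Y(\cdot-s)$, because the delay, the constant matrices $A,B$, and the solution operator of $X'-X=g$ on $C_T$ are all translation invariant. Hence (after complexification) $K$ leaves each finite-dimensional Fourier component $V_k=\{c\,e^{i\omega_k t}:c\in\C^m\}$, $\omega_k=2\pi k/T$, invariant, acting there as the matrix
\[
K_k=\frac{1}{i\omega_k-1}\left[(A-I)+B\,e^{-i\omega_k d}\right],
\]
and the spectrum of $K$ is the union of the spectra of the blocks $K_k$. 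Since $\|K_k\|\to 0$ as $|k|\to\infty$, all but finitely many blocks have norm $<1$ and thus carry no eigenvalue outside the unit disc, so $\nu<\infty$.

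The crux is then a parity argument. For $k=0$ one gets $K_0=I-(A+B)$, whose eigenvalues are $1-\theta_j$ with $\theta_j$ the eigenvalues of $A+B$; such an eigenvalue is real and exceeds $1$ precisely when $\theta_j$ is real and negative, so the $k=0$ block contributes exactly $n_-$, the number of negative real eigenvalues of $A+B$ counted with algebraic multiplicity. (The hypothesis rules out $\det(A+B)=0$, since a nonzero kernel vector of $A+B$ would be a nontrivial constant $T$-periodic solution of \eqref{linear delayed system}.) For $k\neq 0$, because $K$ is a real operator one has $K_{-k}=\overline{K_k}$; consequently, for any real $\mu$ the algebraic multiplicity of $\mu$ in $K_{-k}$ equals that of $\bar\mu=\mu$ in $K_k$, so the modes $\pm k$ always contribute an \emph{even} number to $\nu$. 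Therefore $\nu\equiv n_-\pmod 2$, giving $\deg(I-K,B_r(0),0)=(-1)^{\nu}=(-1)^{n_-}=\mathrm{sgn}(\det(A+B))$, since complex eigenvalues of the real matrix $A+B$ occur in conjugate pairs and positive real eigenvalues do not affect the sign of the determinant. Combining with the affinity identity yields $\deg(I-P,B_r(0),0)=(-1)^m\,\mathrm{sgn}(\det(A+B))$. The main obstacle to making this fully rigorous is the spectral bookkeeping: justifying the simultaneous block-diagonalization of the compact operator $K$ over the translation group, the finiteness of $\nu$, and the Leray--Schauder index formula itself --- which is precisely the content imported from \cite{Amster-Kuna}.
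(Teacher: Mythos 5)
Your argument is correct and follows essentially the same route as the paper: Proposition~\ref{grado-poinc} is proved there by reducing via the Krasnoselskii affinity principle and then invoking the computation of $\deg(I-K,B_r(0),0)=\mathrm{sgn}(\det(A+B))$, which the paper imports wholesale from \cite{Amster-Kuna} rather than carrying out. Your explicit Fourier-block computation --- the $k=0$ block $K_0=I-(A+B)$ contributing $(-1)^{n_-}=\mathrm{sgn}(\det(A+B))$, the $\pm k$ modes pairing off evenly because $K_{-k}=\overline{K_k}$, and the observation that the no-nontrivial-periodic-solution hypothesis forces $\det(A+B)\neq 0$ --- is a sound rendering of precisely the step the paper delegates to that citation.
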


\begin{remark} \label{countable} It is worth point out the following:
   \begin{enumerate}
       \item In order to prove the existence of $T$-periodic solutions, the condition $d\le T$ may be always assumed. This is simply because $X$ is a $T$-periodic solution, for $d$, then it is trivially a $T$-periodic solution for $d+nT$, where $n \in\mathbb Z$ is arbitrary. However, it is worth mentioning that, although the $T$-periodic orbits coincide, the (semi) dynamic of the system changes for different values of $n$. 
       \item The assumption of Proposition \ref{grado-poinc} is satisfied for all $d$ except for a countable set 
       $$N=N_{A,B}\subset [0,T].$$  More precisely, it is readily verified that 
       \eqref{linear delayed system} has no non-trivial $T$-periodic solutions if and only 
       if $h_n(d)\ne 0$ for all $n\in \mathbb Z$, where 
       $$h_n(d):= \det\left(\frac {2n\pi i}T I - A -  e^{-\frac {2n\pi i d}T} B \right).$$
       Because $h_n$ is analytic, the set of its zeros is at most countable for each $n$, and the claim follows. 
   \end{enumerate}
\end{remark}

\begin{prop}\label{periodic solutions}
Let $M\in \mathbb{R}^{m\times m}$ and consider the linear system
\begin{equation}
\label{Floquet System}
\dot{X}=MX.
\end{equation}
Then, the following statements are equivalent
\begin{itemize}
  \item There exists a nontrivial $T$- periodic solution of \eqref{Floquet System}.
    \item $\frac{2k_{0}\pi i}{T}$ is an eigenvalue of $M$ for some $k_{0}\in \N.$
    \item $1$ is a Floquet multiplier of \eqref{Floquet System}.
    \end{itemize}
\end{prop}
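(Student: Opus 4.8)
The plan is to reduce all three conditions to explicit spectral statements about matrices, using the fact that \eqref{Floquet System} is linear and autonomous. The fundamental matrix solution normalized by $\Phi(0)=I$ is $\Phi(t)=e^{Mt}$, so every solution has the form $X(t)=e^{Mt}X_{0}$ with $X_{0}=X(0)\in\mathbb{R}^{m}$, and the monodromy (Floquet) matrix is $\Phi(T)=e^{MT}$, whose eigenvalues are by definition the Floquet multipliers. I would organize the argument around the Floquet-multiplier condition as a hub and prove its equivalence with each of the other two.

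First I would establish that the first and third statements are equivalent. A solution $X(t)=e^{Mt}X_{0}$ is $T$-periodic precisely when $e^{M(t+T)}X_{0}=e^{Mt}X_{0}$ for all $t$; multiplying by the invertible matrix $e^{-Mt}$ shows this is equivalent to $(e^{MT}-I)X_{0}=0$. Hence a nontrivial $T$-periodic solution exists if and only if $\ker(e^{MT}-I)\neq\{0\}$, that is, if and only if $1$ is an eigenvalue of $\Phi(T)=e^{MT}$, which is exactly the assertion that $1$ is a Floquet multiplier.

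Next I would connect the Floquet-multiplier condition to the second statement through the spectral mapping theorem for the matrix exponential, in its set-theoretic form $\sigma(e^{MT})=\{e^{\lambda T}:\lambda\in\sigma(M)\}$. Thus $1$ is a Floquet multiplier if and only if there is $\lambda\in\sigma(M)$ with $e^{\lambda T}=1$, that is, $\lambda T\in 2\pi i\,\mathbb{Z}$, i.e. $\lambda=\frac{2k\pi i}{T}$ for some $k\in\mathbb{Z}$. The one place requiring care is the reduction from $k\in\mathbb{Z}$ to the natural index $k_{0}\in\mathbb{N}$ appearing in the statement: since $M$ is real, its spectrum is invariant under complex conjugation, so $\frac{2k\pi i}{T}\in\sigma(M)$ forces $\frac{2|k|\pi i}{T}=\overline{\frac{2k\pi i}{T}}\in\sigma(M)$ as well (with $k=0$ giving the eigenvalue $0$), and one may therefore always take $k_{0}=|k|$. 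This closes the chain of equivalences.

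The computation is essentially routine, and the only genuine subtlety is justifying this passage $\mathbb{Z}\to\mathbb{N}$ purely from the reality of $M$. I would also take care to invoke the spectral mapping theorem only at the level of sets, which is all that is needed, since the arguments above depend solely on the presence of the eigenvalue $1$ in $\sigma(e^{MT})$ (equivalently, on $\ker(e^{MT}-I)$ being nontrivial) and never on algebraic multiplicities.
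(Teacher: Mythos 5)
Your proof is correct and complete. There is nothing in the paper to compare it against: the proposition is stated without proof, as a standard fact to be used as a black box (e.g., in the proof that the linearized delayed system at an equilibrium has no nontrivial $T$-periodic solutions for small $d$). Your argument is the canonical one: every solution is $X(t)=e^{Mt}X_{0}$, so $T$-periodicity reduces, after cancelling the invertible factor $e^{Mt}$, to $X_{0}\in\ker(e^{MT}-I)$, which ties statement one to the Floquet multiplier $1$ of the monodromy matrix $e^{MT}$; the set-level spectral mapping theorem $\sigma(e^{MT})=\{e^{\lambda T}:\lambda\in\sigma(M)\}$ then converts the multiplier condition into $\lambda=\frac{2k\pi i}{T}\in\sigma(M)$ for some $k\in\mathbb{Z}$, and your use of conjugation-invariance of $\sigma(M)$ (reality of $M$) to replace $k$ by $k_{0}=|k|$ is precisely the step most write-ups elide. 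Two minor remarks. First, your reduction needs the convention $0\in\mathbb{N}$: if the authors' $\mathbb{N}$ excludes $0$, the proposition as stated misses the degenerate case $0\in\sigma(M)$, where nonzero constant solutions are $T$-periodic and $1$ is a multiplier, yet no eigenvalue $\frac{2k_{0}\pi i}{T}$ with $k_{0}\geq 1$ need exist; this does not affect the paper's application, since there the relevant matrix is invertible. Second, since $M$ is real and hence $e^{MT}-I$ is a real singular matrix whenever $1\in\sigma(e^{MT})$, one can choose $X_{0}\in\mathbb{R}^{m}\setminus\{0\}$ in its kernel, so the nontrivial periodic solution produced is genuinely real-valued, as your restriction $X_{0}\in\mathbb{R}^{m}$ tacitly requires.
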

\begin{lemma}\label{small delay} Assume that $1$ is not a Floquet multiplier of the linear system
\[
\dot{X}=(A+B)X.
\]
Then, the linear delayed system \eqref{linear delayed system} has no nontrivial $T$-periodic solutions, provided that $d$ is small.
\end{lemma}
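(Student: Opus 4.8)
The plan is to reduce the statement to the analytic characterization already recorded in Remark \ref{countable}(2): the system \eqref{linear delayed system} has no nontrivial $T$-periodic solution if and only if
$$
h_n(d) = \det\left(\frac{2n\pi i}{T}I - A - e^{-\frac{2n\pi i d}{T}}B\right) \neq 0 \quad \text{for all } n \in \Z.
$$
First I would translate the hypothesis. Since $1$ is not a Floquet multiplier of $\dot X = (A+B)X$, Proposition \ref{periodic solutions} shows that $\frac{2n\pi i}{T}$ is not an eigenvalue of $A+B$ for any $n$; equivalently, $h_n(0) = \det\!\left(\frac{2n\pi i}{T}I - (A+B)\right) \neq 0$ for every $n \in \Z$. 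Thus the task is to show that this nonvanishing persists, simultaneously for all $n$, once $d$ is taken small.

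The difficulty is exactly the uniformity over the infinite index set $n \in \Z$: continuity of $d \mapsto h_n(d)$ gives, for each fixed $n$, an interval $(-\delta_n,\delta_n)$ on which $h_n$ does not vanish, but a priori one could have $\inf_n \delta_n = 0$. The observation that rescues the argument is that $\frac{2n\pi i}{T}$ is purely imaginary and $d$ is real, so $\left|e^{-2n\pi i d/T}\right| = 1$ for every $n$ and every $d$. Writing $\mu_n := \frac{2n\pi i}{T}$ and factoring, for $n\neq 0$, $h_n(d) = \mu_n^{m}\det\!\left(I - \mu_n^{-1}(A + e^{-\mu_n d}B)\right)$, the perturbation obeys $\left\|\mu_n^{-1}(A + e^{-\mu_n d}B)\right\| \leq \frac{\|A\| + \|B\|}{|\mu_n|}$, a bound \emph{independent of} $d$. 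Hence there is $N_0$ with $|\mu_n| > 2(\|A\|+\|B\|)$ whenever $|n| \geq N_0$, and for all such $n$ the spectral radius of the perturbation stays below $1/2 < 1$, so $I - \mu_n^{-1}(A + e^{-\mu_n d}B)$ is invertible and $h_n(d) \neq 0$ for \emph{every} $d \in \R$.

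It then only remains to control the finitely many indices $|n| < N_0$. For each of them $h_n(0) \neq 0$ by the translated hypothesis, so by continuity there is $\delta_n > 0$ with $h_n(d) \neq 0$ on $(-\delta_n,\delta_n)$. Setting $\delta := \min_{|n| < N_0} \delta_n > 0$ — a genuinely positive number because the minimum is taken over a finite set — we obtain $h_n(d) \neq 0$ for all $n \in \Z$ whenever $|d| < \delta$. By Remark \ref{countable}(2), the linear delayed system \eqref{linear delayed system} then has no nontrivial $T$-periodic solution, which is the claim. I expect the uniform large-$n$ estimate to be the only delicate point; once the $d$-independence of $\left|e^{-\mu_n d}\right| = 1$ is exploited to reduce to finitely many indices, the remainder is a routine continuity-and-finite-minimum argument.
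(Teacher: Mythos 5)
Your proof is correct and is exactly the argument the paper intends but never writes out: Lemma \ref{small delay} is stated without proof, and you supply it by reducing, via the characterization in Remark \ref{countable}(2), to showing $h_n(d)\neq 0$ for all $n\in\Z$ and all small $d$. The one genuinely delicate point --- uniformity over the infinite index set --- you handle correctly with the $d$-independent bound $\left|e^{-2n\pi i d/T}\right|=1$, which forces $h_n(d)\neq 0$ for every real $d$ once $2|n|\pi/T>\|A\|+\|B\|$, leaving only finitely many indices to control by continuity of each $h_n$ at $d=0$ (where $h_n(0)\neq 0$ by the Floquet hypothesis and Proposition \ref{periodic solutions}) and a finite minimum.
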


\section*{3. Main results}

\subsection*{The delayed autonomous case} Let us consider the system
\begin{equation}
\label{eq:delayed system}
\begin{split}
\dot{X}&=\tilde{F}(X,X_{d}),\\ 
\tilde{F}(X,X_{d})&=\begin{pmatrix}
\dot{x} \\
1-\frac{e(v_{0}+G_{1}s_{1}(x,x_{d},g_{1})+G_{2}s_{2}(\dot{x},\dot{x}_{d},g_{2}))^2}{x^2}-x-h_{D}(x,\dot{x})
\end{pmatrix}
\end{split}
\end{equation}
with $\tilde{F}:\Omega \times \Omega \to \mathbb{R}^2$, which corresponds to the  delayed autonomous system associated to (\ref{delayed system}). Our first result provides an interval of positive values of $d$ such that the local asymptotic stability of the equilibrium point $(x_{2},0)$ of \eqref{eq:delayed system} is maintained. We shall need the following preliminary result.

\begin{prop}
Let $X_{\ast}$ be an equilibrium point of (\ref{eq:delayed system}), i.e. $\tilde{F}(X_{\ast},X_{\ast})=0.$ Then, for $d$ small the linearized delayed system at $X_{\ast}$ has no nontrivial $T$-periodic solutions.
\end{prop}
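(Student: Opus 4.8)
The plan is to reduce the statement to a spectral condition on a single $2\times 2$ matrix and then invoke Lemma \ref{small delay}. Denoting by $A$ and $B$ the derivatives of $\tilde{F}$ with respect to its first (instantaneous) and second (delayed) arguments at $(X_{\ast},X_{\ast})$, the linearisation of \eqref{eq:delayed system} at $X_{\ast}$ is a system of the form \eqref{linear delayed system}. By Lemma \ref{small delay}, to conclude for $d$ small it suffices to verify that $1$ is \emph{not} a Floquet multiplier of the limiting system $\dot{X}=(A+B)X$; and by Proposition \ref{periodic solutions} this amounts to showing that $A+B$ has no eigenvalue of the form $2k\pi i/T$ with $k\in\mathbb{Z}$ (the value $k=0$ corresponds to the factor $e^{\lambda T}=1$ with $\lambda=0$, and must be treated as well).

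The key step is to identify the matrix $A+B$. I would observe that the feedback perturbations $s_{1}(x,x_{d},g_{1})=g_{1}(x-x_{d})$ and $s_{2}(\dot{x},\dot{x}_{d},g_{2})=g_{2}(\dot{x}-\dot{x}_{d})$ both vanish identically on the diagonal $X_{d}=X$. Hence restricting $\tilde{F}$ to $X_{d}=X$ cancels the entire feedback contribution and returns exactly the non-delayed field $F$ of \eqref{autonomous system}; in particular the equilibrium condition $\tilde{F}(X_{\ast},X_{\ast})=0$ coincides with \eqref{equilibrium points}. Differentiating $X\mapsto\tilde{F}(X,X)=F(X)$ by the chain rule then gives $A+B=DF(X_{\ast})$, which is precisely the Jacobian recorded in \eqref{linear system},
\[
A+B=\begin{pmatrix} 0 & 1\\ \dfrac{2-3x_{\ast}}{x_{\ast}} & -\partial_{\dot{x}}h_{D}(x_{\ast},0)\end{pmatrix}.
\]

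With $A+B$ in hand, I would rule out the eigenvalues $2k\pi i/T$ by a trace--determinant dichotomy. For $k\neq 0$ the value $2k\pi i/T$ is purely imaginary and nonzero, which for a real $2\times 2$ matrix forces a vanishing trace; but $\mathrm{tr}(A+B)=-\partial_{\dot{x}}h_{D}(x_{\ast},0)$ equals $-c$ (linear damping) or $-\gamma/x_{\ast}^{3}$ (squeeze-film damping), hence is strictly negative since $c,\gamma\in\mathbb{R}^{+}$. For $k=0$ the candidate eigenvalue is $0$, which is excluded whenever $\det(A+B)\neq 0$; here $\det(A+B)=(3x_{\ast}-2)/x_{\ast}$, and this is nonzero because the admissible equilibria satisfy $x_{1}<2/3<x_{2}$ by Proposition \ref{Prop 1}, so $x_{\ast}\neq 2/3$. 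Thus $A+B$ has no eigenvalue $2k\pi i/T$, $1$ fails to be a Floquet multiplier of $\dot{X}=(A+B)X$, and Lemma \ref{small delay} yields the claim.

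The step I expect to be most delicate is the identification $A+B=DF(X_{\ast})$: one must check that the two feedback terms vanish to first order \emph{along the whole diagonal} (not merely at the equilibrium), since it is exactly this that guarantees the \emph{sum} of the two linear blocks reproduces the undelayed Jacobian. Once that is secured, the remaining argument is the elementary spectral computation above combined with the equilibrium description already furnished by Proposition \ref{Prop 1}.
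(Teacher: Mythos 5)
Your proof is correct and takes essentially the same approach as the paper: both reduce, via Proposition \ref{periodic solutions} and Lemma \ref{small delay}, to checking that no value $2k\pi i/T$ is an eigenvalue of $A+B=DF(X_{\ast})$ from \eqref{linear system}, using the equilibrium description of Proposition \ref{Prop 1}. The only cosmetic differences are that the paper computes $\tilde{A}$ and $\tilde{B}$ explicitly and sums them instead of using your diagonal identity $\tilde{F}(X,X)=F(X)$, and it cites Proposition \ref{Prop 1} (saddle, not a center) where you make the underlying trace--determinant dichotomy explicit, including the $k=0$ case.
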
 
\begin{proof}
The equilibrium points of \eqref{eq:delayed system} in $\Omega$ are given by $X_{\ast}=(x_{\ast},0)$ with $x_{\ast}=x_{i}, \,i=1,2$, where $x_{i}$ is a solution of \eqref{equilibrium points}. In consequence, the linear delayed system at the equilibrium $X_{\ast}$ is given as by
\begin{equation}\label{linear delayed}
\dot{X}(t)=\tilde{A}X(t)+\tilde{B}X(t-d),
\end{equation}
with
\begin{equation*}\label{matrices A y B}
\begin{split}
\tilde{A}&=D_{X}\tilde{F}(X_{\ast},X_{\ast})=\begin{pmatrix}
0& 1 \\
\frac{2-3x_{\ast}}{x_{\ast}}-\frac{2ev_{0} g_1}{x^2_{\ast}}&  -\partial_{\dot{x}}h_{D}(x_{\ast},0)-\frac{2ev_{0} g_2}{x^2_{\ast}}
\end{pmatrix},\\
\\
\tilde{B}&=D_{X_{d}}\tilde{F}(X_{\ast},X_{\ast})=\begin{pmatrix}
0& 0 \\
\frac{2ev_{0} g_1}{x^2_{\ast}}& \frac{2ev_{0} g_2}{x^2_{\ast}}
\end{pmatrix}.
\end{split}
\end{equation*}
From Proposition \ref{Prop 1} and Proposition \ref{periodic solutions} the linear system
\[
\dot{X}=(\tilde{A}+\tilde{B})X,
\]
has no nontrivial $T$-periodic solutions because $\frac{2k\pi}{T}i$ is not an eigenvalue of $\tilde{A}+\tilde{B}$ for all $k\in \N.$ A direct application of Lemma  \ref{small delay} concludes the proof.
\end{proof}

Now we are able to provide an explicit interval of delay parameters $d$ such that the local asymptotic stability of the equilibrium $(x_{2},0)$ of (\ref{eq:delayed system}) is guaranteed.
\begin{theorem} \label{delay d0} Let $X_{\ast}=(x_{2},0)$ be the equilibrium point of  (\ref{eq:delayed system}) where $2/3<x_{2}<1$. Define
\[
\begin{split}
&a=\frac{2-3x_{2}}{x_{2}}, \qquad b=-\partial_{\dot{x}}f_{D}(x_{2},0), \\
& \lambda=-\frac{a^2+b^2+1-\sqrt{(a^2+b^2+1)^2-4a^2}}{2a}, \qquad  \hat{g}_{i}=\frac{2(1-x_{2})g_{i}}{v_{0}}, \,\, i=1,2, 
\end{split}
\]
Then $X_{\ast}=(x_{2},0)$ is locally asymptotically stable equilibrium point of \eqref{eq:delayed system} for all $0\leq d < d_{0}$  with
\begin{equation}\label{d0}
d_{0}=\frac{\sqrt{\lambda}|a|}{|\hat{g}_{1}+\hat{g}_{2}|\max\left\{1,\frac{a-1}{b}\right\}\big(\max\left\{1,|a-\hat{g}_{1}|+|b-\hat{g}_{2}|\right\}+|\hat{g}_{1}+\hat{g}_{2}|\big)}.
\end{equation}
\end{theorem}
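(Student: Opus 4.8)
The plan is to analyze the local asymptotic stability of $X_\ast=(x_2,0)$ through the characteristic roots of its linearization, and to locate explicitly the first value of $d$ at which such a root can reach the imaginary axis. First I would record the linearization produced in the preceding Proposition. Using the equilibrium relation $ev_0^2=(1-x_2)x_2^2$ to simplify $\tfrac{2ev_0 g_i}{x_2^2}=\hat g_i$, the matrices become
\[
\tilde A=\begin{pmatrix}0&1\\ a-\hat g_1 & b-\hat g_2\end{pmatrix},\qquad \tilde B=\begin{pmatrix}0&0\\ \hat g_1&\hat g_2\end{pmatrix},\qquad \tilde A+\tilde B=\begin{pmatrix}0&1\\ a& b\end{pmatrix},
\]
where $b$ denotes the $(2,2)$ entry $-\partial_{\dot x}h_D(x_2,0)$, so that the gains cancel in the sum. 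Since $x_2>2/3$ gives $a<0$ and dissipativity gives $b<0$, the characteristic polynomial $\Delta_0(\mu):=\det(\mu I-(\tilde A+\tilde B))=\mu^2-b\mu-a$ has both roots in the open left half-plane, so the equilibrium is asymptotically stable at $d=0$. The characteristic function of the delayed system, $\Delta(\mu,d):=\det(\mu I-\tilde A-e^{-\mu d}\tilde B)$, admits the convenient splitting
\[
\Delta(\mu,d)=\Delta_0(\mu)+\big(1-e^{-\mu d}\big)Q(\mu),\qquad Q(\mu):=\hat g_1+\hat g_2\mu,
\]
which reduces to $\Delta_0$ at $d=0$.

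Next I would reduce the problem to the imaginary axis. For a retarded system the zeros of $\Delta(\cdot,d)$ are finitely many in every half-plane $\{\mathrm{Re}\,\mu\ge\gamma\}$ and vary continuously with $d$; since at $d=0$ they all lie in the open left half-plane, the spectral abscissa can reach $0$ only through a root crossing the imaginary axis. Hence, by the principle of linearized stability for retarded functional differential equations, it suffices to show that $\Delta(i\omega,d)\ne 0$ for all $\omega\in\R$ whenever $0\le d<d_0$.

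The core is an explicit exclusion of imaginary roots. Assume $\Delta(i\omega,d)=0$; note $\omega=0$ is impossible since $\Delta_0(0)=-a\ne0$. The splitting gives $\Delta_0(i\omega)=-(1-e^{-i\omega d})Q(i\omega)$, and since $\mathrm{Re}(i\omega)=0$ one has $|1-e^{-i\omega d}|=2|\sin(\omega d/2)|\le d|\omega|$, whence $|\Delta_0(i\omega)|\le d\,|\omega|\,|Q(i\omega)|$. I would first confine the admissible crossing frequency: $i\omega$ is an eigenvalue of $\tilde A+e^{-i\omega d}\tilde B$, so $|\omega|\le\|\tilde A\|_\infty+\|\tilde B\|_\infty=\max\{1,|a-\hat g_1|+|b-\hat g_2|\}+|\hat g_1+\hat g_2|$, which is precisely the last factor in the denominator of $d_0$. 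It then remains to control the two scalar factors on a compact frequency window: an upper bound for $|Q(i\omega)|$ of the form $\max\{1,|\omega|\}\,|\hat g_1+\hat g_2|$, and a lower bound for $|\Delta_0(i\omega)|/|\omega|$ of the form $\sqrt{\lambda}\,|a|/\max\{1,(a-1)/b\}$. Here $\lambda$ is exactly the smaller root of $a\lambda^2+(a^2+b^2+1)\lambda+a=0$, i.e. the ratio $\sigma_{\min}/\sigma_{\max}$ of the singular values of $\tilde A+\tilde B$, while $\max\{1,(a-1)/b\}$ quantifies the proximity of the spectrum of $\tilde A+\tilde B$ to the imaginary axis (stability being more fragile when the damping $|b|$ is small). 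Substituting these three estimates into $|\Delta_0(i\omega)|\le d|\omega||Q(i\omega)|$ forces $d\ge d_0$, contradicting $d<d_0$; thus no imaginary root exists and all roots remain in the open left half-plane on $[0,d_0)$.

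The main obstacle is exactly this last step: certifying the explicit, $\omega$-uniform lower bound $\inf_{\omega\ne0}\,|\Delta_0(i\omega)|/(|\omega|\,|Q(i\omega)|)\ge d_0$. Written out, $|\Delta_0(i\omega)|^2=(\omega^2+a)^2+b^2\omega^2$ and $|Q(i\omega)|^2=\hat g_1^2+\hat g_2^2\omega^2$, so the quantity to be minimized is a rational function of $\omega^2$; the a priori bound $|\omega|\le\|\tilde A\|_\infty+\|\tilde B\|_\infty$ reduces this to a compact interval, and the algebraic identity defining $\lambda$ through the singular values of $\tilde A+\tilde B$ is precisely what converts the minimum of $|\Delta_0(i\omega)|$ into the closed-form constant $\sqrt{\lambda}\,|a|$. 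Everything else—that $\Delta_0$ is Hurwitz and that the retarded spectrum depends continuously on $d$—is routine.
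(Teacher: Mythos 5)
Your route is genuinely different from the paper's. The paper never touches the characteristic function: it verifies the hypotheses of the Khusainov--Yun'kova theorem (Theorem \ref{delay size} in the Appendix) by exhibiting the explicit Lyapunov matrix $C$ with $(A+B)^{T}C+C(A+B)=-I_{2}$, checking positive definiteness via Sylvester's criterion, and computing $\|A\|_{\infty}$, $\|B\|_{\infty}$, $\|CB\|_{\infty}$ and the eigenvalue ratio $\lambda_{\min}(C)/\lambda_{\max}(C)=\lambda$; the constant $d_{0}$ in \eqref{d0} is literally the Khusainov bound after these computations. Your frequency-domain crossing argument is a self-contained alternative: the splitting $\Delta(\mu,d)=\Delta_{0}(\mu)+(1-e^{-\mu d})Q(\mu)$ with $\Delta_{0}(\mu)=\mu^{2}-b\mu-a$ and $Q(\mu)=\hat g_{1}+\hat g_{2}\mu$ is correct, the estimate $|1-e^{-i\omega d}|\le d|\omega|$ is correct, the exclusion of $\omega=0$ via $\Delta(0,d)=-a\ne0$ is correct, the a priori frequency window $|\omega|\le\|\tilde A\|_{\infty}+\|\tilde B\|_{\infty}$ is correct (spectral radius bounded by an induced norm), and the reduction to imaginary-axis crossings by continuity of the retarded spectrum plus linearized stability is standard. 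Remarkably, your chain of estimates reconstructs exactly the paper's constant, buying independence from the cited black box at the price of having to certify the uniform lower bound yourself.

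That lower bound is precisely where your proposal has a gap, and your stated mechanism for it is off. The infimum of $|\Delta_{0}(i\omega)|/|\omega|$ is not produced by the singular-value identity: writing $u=\omega^{2}$, one has $|\Delta_{0}(i\omega)|^{2}/\omega^{2}=(u+a)^{2}/u+b^{2}$, minimized at $u=|a|$ with value $b^{2}$, so the infimum equals $|b|$ and involves neither $\lambda$ nor $a$ directly. What saves the argument is that the inequality $|b|\ge\sqrt{\lambda}\,|a|/\max\{1,(a-1)/b\}$ happens to hold, loosely: your identity $\lambda=\sigma_{\min}/\sigma_{\max}$ for $\tilde A+\tilde B$ is true (the squared singular values satisfy $\mu_{-}\mu_{+}=a^{2}$, whence $\mu_{-}/\mu_{+}=\lambda^{2}$), so $\sqrt{\lambda}\le1$; if $(a-1)/b\ge1$ the right-hand side is $\sqrt{\lambda}\,|a|\,|b|/(1+|a|)\le|b|$, while if $(a-1)/b<1$ then $|b|>1+|a|$ and the right-hand side is $\sqrt{\lambda}\,|a|\le|a|<|b|$. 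Inserting this short verification closes your chain $d\ge|\Delta_{0}(i\omega)|/(|\omega|\,|Q(i\omega)|)\ge d_{0}$, using $\max\{1,|\omega|\}\le\|\tilde A\|_{\infty}+\|\tilde B\|_{\infty}$ (valid since $\|\tilde A\|_{\infty}\ge1$). One caveat you share with the paper: $\|\tilde B\|_{\infty}$ is $|\hat g_{1}|+|\hat g_{2}|$, not $|\hat g_{1}+\hat g_{2}|$, and correspondingly your bound $|Q(i\omega)|=\sqrt{\hat g_{1}^{2}+\hat g_{2}^{2}\omega^{2}}\le\max\{1,|\omega|\}\,|\hat g_{1}+\hat g_{2}|$ fails outright for gains of opposite signs (take $\hat g_{1}=-\hat g_{2}\ne0$); with $|\hat g_{1}|+|\hat g_{2}|$ in place of $|\hat g_{1}+\hat g_{2}|$ both your estimate and the constant $d_{0}$ become correct.
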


\begin{proof}
The linearized delayed system of \eqref{eq:delayed system} at $X_{\ast}=(x_{2},0)$ is given by 
\begin{equation}\label{sistema linealizado}
\dot{X}(t)=AX(t)+BX(t-d),
\end{equation}
with
\[
A=\begin{pmatrix}
0& 1 \\
a-\hat{g}_{1}&  b-\hat{g}_{2}
\end{pmatrix} \quad \text{and} \quad B=\begin{pmatrix}
0& 0 \\
\hat{g}_{1}& \hat{g}_{2}
\end{pmatrix}.
\]
From Proposition \ref{Prop 1}, the trivial solution of 
\[
\dot{X}=(A+B)X,
\]
is asymptotically stable. Now, define the matrix $C=(c_{ij})_{1\leq i,j \leq 2}$  with
\begin{equation*}\label{coeficientes de C}
c_{11}=\frac{b^2-a(1-a)}{2ab}, \quad c_{22}=\frac{1-a}{2ab}, \quad \text{and} \quad  c_{12}=c_{21}=-\frac{1}{2a}. 
\end{equation*}

Since $a<0$ and $b<0$, a direct computation proves that $c_{11}>0$ and $\det C>0$. Therefore, by  Sylvester's criterion, the matrix $C$ is a real symmetric positive definite matrix. Moreover, $C$ satisfies the equation
\[
(A+B)^{T}C+C(A+B)=\begin{pmatrix}
2ac_{12} & c_{11}+bc_{12}+ac_{22}\\
c_{11}+bc_{12}+ac_{22} & 2(c_{12}+bc_{22})
\end{pmatrix}=-I_{2}.
\]
The previous conditions correspond precisely to the hypothesis in Theorem \ref{delay size} (see Appendix). In consequence, the trivial solution of \eqref{sistema linealizado} is asymptotically stable for all $0\leq d<d_{0}$ with 
\[
d_{0}=\Big(2\big(||A||+||B||\big)||CB||\Big)^{-1}\Big(\lambda_{\min}(C)/\lambda_{\max}(C)\Big)^{1/2}, 
\]
where $\lambda_{\min}(C)$ and $\lambda_{\max}(C)$  respectively denote the smallest and largest eigenvalues of $C$. Taking the matrix norm  $||P||_{\infty}=\max\left\{|p_{11}|+|p_{12}|, |p_{21}|+|p_{22}|\right\}$ for a given real matrix $p=(p_{i,j})_{1\leq i,j \leq 2}$, direct computations show that
\[
\begin{split}
||A||_{\infty}&=\max\left\{1,|a-\hat{g}_{1}|+|b-\hat{g}_{2}|\right\},\\
||B||_{\infty}&=|\hat{g}_{1}+\hat{g}_{2}|,\\
||CB||_{\infty}&=|\hat{g}_{1}+\hat{g}_{2}|\max\left\{c_{12},c_{22}\right\}=\frac{|\hat{g}_{1}+\hat{g}_{2}|}{2|a|}\max\left\{1,\frac{a-1}{b}\right\},\\
\frac{\lambda_{\min}(C)}{\lambda_{\max}(C)}&=-\frac{a^2+b^2+1-\sqrt{(a^2+b^2+1)^2-4a^2}}{2a}.
\end{split}
\]
To sum up, the equilibrium point $X_{\ast}=(x_{2},0)$ of (\ref{eq:delayed system}) is locally asymptotically stable for all $0\leq d<d_{0}$ with $d_{0}$ given by \eqref{d0}. This completes the proof.
\end{proof}

\subsection*{Periodic solutions under delay effects} In this part of the document, we prove the existence of $T$-periodic solutions of the system \eqref{system nathanson_norm non-feedback} in the case $h_{D}(x,\dot{x})=c\dot{x}$, as a local continuation of the two nontrivial periodic solutions for the non-delayed case. Firstly, we present a continuation theorem using the gain values $g_1$ and $g_{2}$ as continuation parameters.

\begin{theorem}
    Assume that $0<v_0<v_0^*$ and let $x_1$, $x_2$ be the values given by Proposition \ref{Prop 1}. Set $N=N_1\cup N_2$, where $N_{j}$, is as in the Remark \ref{countable} for the respective linearizations at $x_j,$ $j=1,2.$ If $d\in [0,T[\,\backslash N$,
then there exists $\tilde{\delta}>0$ such that 
    problem \eqref{system nathanson_norm feedback} has at least two positive $T$-periodic solutions $X_1$, 
    $X_2$ with $X_1$ unstable. 
\end{theorem}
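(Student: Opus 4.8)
The plan is to realise $X_1,X_2$ as continuations, in the gain parameters $(g_1,g_2)$, of the two periodic solutions $\Psi_1$ (unstable) and $\Psi_2$ (stable) furnished by Theorem \ref{Periodic solutions existence} for the non-delayed problem. The crucial observation is that \eqref{system nathanson_norm feedback} at $g_1=g_2=0$ collapses onto the non-delayed system \eqref{system nathanson_norm non-feedback}: the feedback enters only through $\mathcal V$ with the gains as coefficients, so annihilating the gains removes every dependence on $X_d$, the delay disappears, and $\Psi_1,\Psi_2$ are exactly the positive $T$-periodic solutions at the base point $g=0$.

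For the existence I would recast the search for positive $T$-periodic solutions as a zero problem for the map
\[
\mathcal N(x,g_1,g_2):=\ddot x+c\dot x+x-1+\frac{e\,\mathcal V^2}{x^2},
\]
viewed between the spaces of $C^2$ and $C^0$ $T$-periodic functions. This map is jointly $C^1$ in $(x,g_1,g_2)$, since the only delayed ingredient is the bounded translation $x\mapsto x(\cdot-d)$ and $\mathcal N$ is polynomial in the gains. At each base point $\mathcal N(\psi_j,0,0)=0$, and $D_x\mathcal N(\psi_j,0,0)$ is precisely the non-delayed linearisation appearing in Remark \ref{non nontrivial periodic sol}; by that remark it admits only the trivial $T$-periodic solution, hence it is a Fredholm operator of index zero with trivial kernel, i.e.\ an isomorphism. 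The Implicit Function Theorem then yields, for each $j=1,2$, a unique branch $(g_1,g_2)\mapsto x_j(g_1,g_2)$ with $x_j(0,0)=\psi_j$, defined for $|(g_1,g_2)|<\tilde\delta$. Because $\psi_1<2/3<\psi_2$ remain positive and separated, shrinking $\tilde\delta$ makes $X_1,X_2$ two distinct positive $T$-periodic solutions. This step is insensitive to $d$; the hypothesis $d\notin N$ is needed only to identify $X_1$ as unstable.

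To show that $X_1$ is unstable I would compute the fixed-point index of the delayed Poincaré operator $P:C[-d,0]\to C[-d,0]$ at $X_1$. The assumption $d\notin N_1$ guarantees that the linearised delayed system at $X_1$ has no nontrivial $T$-periodic solution, so by the Affinity principle together with Proposition \ref{grado-poinc} the index is well defined and equals $(-1)^m\,sgn(\det(A_1+B_1))$ with $m=2$. Since the gain contributions to $A_1$ and $B_1$ cancel in the sum, $A_1+B_1$ coincides with the Jacobian $DF(x_1,0)$ of Proposition \ref{Prop 1}, so that
\[
\det(A_1+B_1)=\det\begin{pmatrix} 0 & 1\\ \dfrac{2-3x_1}{x_1} & -c\end{pmatrix}=\frac{3x_1-2}{x_1}<0,
\]
because $0<x_1<2/3$. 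Hence the index equals $-1$. Since any uniformly asymptotically stable fixed point of a compact map has index $+1$, the value $-1$ rules out asymptotic stability and forces $X_1$ to be unstable. The companion computation at $x_2>2/3$ gives $\det(A_2+B_2)>0$ and index $+1$, in agreement with the expected stability of $X_2$.

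The main obstacle is this last, degree-theoretic step rather than the existence step. The Poincaré operator of a delay equation acts on the infinite-dimensional space $C[-d,0]$, so its index cannot be read off a finite-dimensional Jacobian directly; the Affinity principle is exactly the tool that reduces it to the scalar sign $sgn(\det(A_1+B_1))$, and this reduction is what forces the nondegeneracy hypothesis $d\notin N$ on the linearised delayed system. A secondary technical point is to verify that the index computed at the continued solution $X_1(g_1,g_2)$ for small gains coincides with its value at $g=0$ (where $B$ vanishes and the formula is immediate); this follows from homotopy invariance of the degree along the Implicit Function Theorem branch, using that $d\notin N$ keeps the linearisation nondegenerate throughout the homotopy.
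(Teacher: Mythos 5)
Your proposal proves a different theorem from the one stated, with the smallness in the wrong parameter. In the statement the gains $g_1,g_2$ are arbitrary fixed data (they enter the constant matrices $\tilde A,\tilde B$ whose exceptional set $N=N_1\cup N_2$ is built in Remark \ref{countable}), and $\tilde{\delta}$ bounds the AC amplitude $\delta$ in $V_\delta(t)=v_0+\delta v(t)$. The paper's proof accordingly anchors the computation at $\delta=0$: there $(x_1,0)$ and $(x_2,0)$ are equilibria of the delayed \emph{autonomous} system, the gain contributions cancel so that $\tilde A+\tilde B$ is exactly the matrix in \eqref{linear system}, Proposition \ref{grado-poinc} applies legitimately (the linearization at an equilibrium has constant coefficients, and $d\notin N$ is precisely its nondegeneracy hypothesis) and gives degree $\mathrm{sgn}(3x_j-2)$, i.e. $-1$ near $x_1$ and $+1$ near $x_2$; local constancy of the Leray--Schauder degree in $(r,\delta)$ then produces the two positive $T$-periodic solutions for $|\delta|<\tilde\delta$, and the index $-1$ yields instability of $X_1$ via \cite{Ortega_1990}. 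Your continuation in $(g_1,g_2)$ from $\Psi_1,\Psi_2$ is essentially Theorem \ref{main-theo 1}: it delivers existence only for \emph{small gains} at a fixed $\delta$, whereas the statement demands arbitrary gains and small $\delta$; moreover it silently imports hypotheses the statement does not grant, since Theorem \ref{Periodic solutions existence} needs $1-2eV_\delta^2(t)<(\pi/T)^2+c^2/4$ for the existence of $\Psi_2$ (and, via Remark \ref{non nontrivial periodic sol}, for the nondegeneracy of the linearizations), a condition that does not follow from $0<v_0<v_0^*$ for arbitrary $T$, $c$, $\delta$.

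The instability step also has a genuine flaw as written. For $\delta\neq 0$ the continued solution $X_1(g_1,g_2)$ is nonconstant, so its linearized delayed system has $T$-periodic coefficients $A_1(t),B_1(t)$, and Proposition \ref{grado-poinc} --- stated for constant matrices --- does not give the formula $(-1)^m\,\mathrm{sgn}\det(A_1+B_1)$ there; your identification ``$A_1+B_1$ coincides with $DF(x_1,0)$'' conflates the periodic solution with the autonomous equilibrium. Likewise $d\notin N_1$ controls only the constant-coefficient linearization at the equilibrium $(x_1,0)$ of \eqref{eq:delayed system}; it says nothing about nondegeneracy of the linearization along $X_1(g)$, nor along your homotopy in the gains (the exceptional set itself moves with $g$). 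Finally, at the endpoint $g=0$ of your homotopy the index is \emph{not} immediate: with $\delta\neq0$ the linearization at $\Psi_1$ is nonautonomous, so the relevant quantity is $\mathrm{sgn}\det(I-M_1)$ with $M_1$ the monodromy matrix, and proving this equals $-1$ requires a separate Ortega-type index argument for a solution trapped between reversed lower and upper solutions --- exactly the work the paper avoids by computing the degree at the autonomous equilibrium and perturbing in $\delta$ rather than in the gains.
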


\begin{proof} 
    Observe, in the first place, that the values $x_j$, $j=1,2$ are also equilibrium points for the delayed model. 
    Furthermore,  
    $A+B$ is the matrix given in    \eqref{linear system}, so by Proposition \ref{grado-poinc} 
    the Leray-Schauder degree of $I-P_L$ for $X_j:=\left(\begin{array}{c}
         x_j \\
         0
    \end{array}\right)$ over $B_r(0)$ is equal to the sign  of $3x_j -2$, $j=1,2$. Next, identify $\mathbb{R}^2$ with the subspace of constant functions of $C[-d,0]$.
    Because the degree is locally constant, taking $r$ and $\delta$ small enough, it follows that 
    $\deg(I-P,B_r(X_j),0)\ne 0$, so the existence of $T$-periodic solutions follows. 
    Furthermore, when $x_j=x_1$ the degree is equal to $-1$, which yields the instability of the $T$-periodic solution. (see \cite{Ortega_1990})
     \end{proof}

\begin{theorem}[Continuation of periodic solutions over the gains]\label{main-theo 1}
Let $\Psi_{i}(t)$, $i\in \left\{1,2\right\}$ be a $T$-periodic solution of \eqref{system nathanson_norm non-feedback}  given in Theorem \ref{Periodic solutions existence} for the case $h_{D}(x,\dot{x})=c\dot{x}$. Then, for each fixed $d\in \mathbb{R}$ there exists a neighborhood $\mathcal{V} \subset \mathbb{R}^{2}$ of $(0,0)$ and a unique function $X^{G}(t)=X(t,d,G)$, where $G=(g_{1},g_{2})$, that is a $T$-periodic solution of \eqref{system nathanson_norm feedback} for all $G \in \mathcal{V}$, i.e., 
{\renewcommand{\labelitemi}{$\triangleright$}
\begin{itemize}
    \item $\dot{X}^{G}(t)=F(t,X^{G}(t),X^{G}_{d}(t),G), \quad X^{G}(t+T)=X^{G}(t),$ for all $t,d\in \mathbb{R}$, and $G \in \mathcal{V}.$
    \item $X^{0}(t)=\Psi_{i}$ for all $t,d\in \mathbb{R}$,with $\Psi_{i}\in \left\{\Psi_{1},\Psi_{2}\right\}$, that is, for $G=(0,0)$.
\end{itemize}}
\end{theorem}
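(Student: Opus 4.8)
The plan is to realize the $T$-periodic solutions of \eqref{system nathanson_norm feedback} as zeros of a smooth nonlinear operator and to continue the branch through $\Psi_i$ by the Implicit Function Theorem in Banach spaces, using $G=(g_1,g_2)$ as the continuation parameters and keeping $d$ fixed. For fixed $d$, I would let $C_T^k:=C_T^k(\mathbb{R},\mathbb{R})$ be the space of $T$-periodic functions of class $C^k$, write $C_T:=C_T^0$, and take $U\subset C_T^2$ to be the open set of strictly positive functions. Then I would define $\mathcal{G}\colon U\times\mathbb{R}^2\to C_T$ by
\[
\mathcal{G}(x,G)(t)=\ddot{x}(t)+c\dot{x}(t)+x(t)-1+\frac{e\,\mathcal{V}^2\big(t,s_1(x,x_d,g_1),s_2(\dot{x},\dot{x}_d,g_2)\big)}{x^2(t)},
\]
with $x_d(t)=x(t-d)$ and $\dot{x}_d(t)=\dot{x}(t-d)$, so that for each $G$ the zeros of $\mathcal{G}(\cdot,G)$ are exactly the positive $T$-periodic solutions of \eqref{eq:nathanson_norm feedback 2} with gain $G$. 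Since the translation $x\mapsto x(\cdot-d)$ is a bounded operator on $C_T^k$ and the nonlinearity $e(V_\delta(t)+g_1(u-w)+g_2(p-q))^2/u^2$ is smooth on $\{u>0\}$, the associated superposition operator is $C^1$ (in fact $C^\infty$) on $U\times\mathbb{R}^2$; this well-posedness and smoothness step is routine.

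The base point is $\mathcal{G}(\psi_i,(0,0))=0$, where $\psi_i$ is the first component of $\Psi_i$: at $G=(0,0)$ the feedback corrections $s_1,s_2$ vanish and \eqref{eq:nathanson_norm feedback 2} collapses to the non-feedback equation \eqref{system nathanson_norm non-feedback}, which is solved by $\Psi_i$ through Theorem \ref{Periodic solutions existence}. The key structural observation I would exploit is that the derivative in $x$ at the base point is \emph{delay-free}: the feedback terms carry the factors $g_1,g_2$, which vanish at $G=(0,0)$, so they contribute nothing to $D_x\mathcal{G}$ there, and one is left with
\[
D_x\mathcal{G}(\psi_i,(0,0))\,v=\ddot v+c\dot v+\Big(1-\tfrac{2eV_\delta^2(t)}{\psi_i^3(t)}\Big)v=:Lv .
\]
This is exactly the linearization of \eqref{system nathanson_norm non-feedback} at $\Psi_i$ written in Remark \ref{non nontrivial periodic sol}, and it is precisely this vanishing of the delay at $G=(0,0)$ that makes $G$ the right continuation parameter.

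The main step — and the one I expect to be the real obstacle — is to prove that $L\colon C_T^2\to C_T$ is an isomorphism. I would argue that $L$ is a compact perturbation of $v\mapsto\ddot v$ (the embeddings $C_T^2\hookrightarrow C_T^1\hookrightarrow C_T$ being compact), hence Fredholm of index zero, so that invertibility is equivalent to injectivity. Then $\ker L$ is precisely the set of $T$-periodic solutions of the homogeneous equation whose first-order form is the linear system displayed in Remark \ref{non nontrivial periodic sol}; that remark guarantees, under the hypotheses of Theorem \ref{Periodic solutions existence} (which hold for both $\Psi_1$ and $\Psi_2$), that there are none but the trivial one. Hence $\ker L=\{0\}$ and $L$ is invertible.

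Finally, I would invoke the Implicit Function Theorem in Banach spaces to produce a neighborhood $\mathcal{V}\subset\mathbb{R}^2$ of $(0,0)$ and a unique $C^1$ map $G\mapsto x(\cdot,d,G)\in U$ with $\mathcal{G}(x(\cdot,d,G),G)=0$ on $\mathcal{V}$ and $x(\cdot,d,(0,0))=\psi_i$; setting $X^G(t)=(x(t,d,G),\dot{x}(t,d,G))^{\mathrm{tr}}$ gives the desired $T$-periodic solution with $X^0=\Psi_i$. Because $d$ enters $\mathcal{G}$ only as a fixed translation, the whole argument goes through for every $d\in\mathbb{R}$, and the asserted uniqueness is exactly the local uniqueness furnished by the Implicit Function Theorem.
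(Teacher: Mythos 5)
Your proposal is correct and follows essentially the same route as the paper: an Implicit Function Theorem continuation in $G$ from the non-delayed periodic solution, with invertibility of the linearization at $G=(0,0)$ reduced, via a Fredholm argument, to the nonexistence of nontrivial $T$-periodic solutions of the linearized equation guaranteed by Remark \ref{non nontrivial periodic sol} (the key observation that the delay terms carry factors $g_1,g_2$ and hence drop out of the linearization at $G=(0,0)$ is exactly what the paper uses as well). The only inessential difference is that you work with the scalar second-order operator on $C^{2}_{T}$ and justify the index-zero Fredholm property by a compact-perturbation argument, whereas the paper formulates the problem as the first-order system $\mathcal{H}(X,G)=\dot{X}-F(\cdot,X,X_{d},G)$ on $C^{1}_{T}(\mathbb{R},\mathbb{R}^{2})$ and obtains the isomorphism from the Fredholm Alternative together with the Open Mapping Theorem.
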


\begin{proof}
Let us consider the Banach spaces
\[
C^{1}_{T}=\left\{X\in C^{1}(\mathbb{R},\mathbb{R}^{2}): X(t+T)=X(t)\right\} \quad \text{and} \quad C^{0}_{T}=\left\{X\in C^{0}(\mathbb{R},\mathbb{R}^{2}): X(t+T)=X(t)\right\},
\]
and define the functional $\displaystyle{\mathcal{H}:C^{1}_{T}\times \mathbb{R}^{2}\to C^{0}_{T}}$ given by 
\[
\mathcal{H}(X,G):=\dot{X}-F(\cdot,X,X_{d},G), \quad F(t,X,X_{d},G)=\begin{pmatrix}
\dot{x}\\
1-\frac{e \mathcal{V}^2(t,s_{1}(x,x_{d},g_{1}),s_{2}(\dot{x},\dot{x}_{d},g_{2}))}{x^2}-x-c\dot{x}
\end{pmatrix}
\]
The function $\mathcal{H}$ is well defined and continuous. Moreover, $\displaystyle{\partial_{X}}\mathcal{H}(X,G)\in L(C^{1}_{T},C^{0}_{T})$ and $\displaystyle{\partial_{G}\mathcal{H}(X,C^{0}_{T})\in L(\mathbb{R}^{2}, C^{0}_{T})}$ are given by 
\[
\partial_{X}\mathcal{H}(X,G)(Y):=\dot{Y}-\partial_{X}F(\cdot,X,X_{d},G)Y \quad \text{and} \quad  \partial_{G}\mathcal{H}(X,G)(\mathcal{G})=-\partial_{G}F(\cdot,X,X_{d},G)\mathcal{G},
\]
are continuous, therefore $\mathcal{H}$ is continuously differentiable with $D\mathcal{H}:C^{1}_{T}\times \mathbb{R}^{2}\to L(C^{1}_{T}\times \mathbb{R}^{2},C^{0}_{T})$ with
\[
D\mathcal{H}(X,G)(Y,\mathcal{G})=\dot{Y}-\partial_{X}F(\cdot,X,X_{d},G)Y-\partial_{G}F(\cdot,X,X_{d},G)\mathcal{G}.
\]
Notice that the equation $\mathcal{H}(X,0)=0$ has the two nontrivial $T$-periodic solutions $\Psi_{1,2}=\Psi_{1,2}(t)$. Let $\displaystyle{\Psi^{\ast}\in \left\{\Psi_{1},\Psi_{2}\right\}}$ and consider for example the function $\Psi^{\ast}(t)=\Psi_{1}(t)$, then from Theorem \ref{Periodic solutions existence} it follows that the linear operator $\displaystyle{\partial_{X}\mathcal{H}(\Psi^{\ast},0):C^{1}_{T}\to C^{0}_{T}}$ with
\[
\begin{split}
\partial_{X}\mathcal{H}(\Psi^{\ast},0)(X(t))&=\dot{X}(t)-\partial_{X}F(t,\Psi^{\ast},\Psi^{\ast}_{0},0)X(t),\\
&=\dot{X}(t)-\begin{pmatrix}
0& 1 \\
\frac{2e V_{\delta}^{2}(t)}{\psi^{3}_{1}(t)}-1& -c
\end{pmatrix}X(t),
\end{split}
\]
is one-to-one. (See Remark \ref{non nontrivial periodic sol}) Then, by the Fredholm Alternative, for each function $Y\in C^{0}_{T}$ there exists a unique solution  $W_{Y}\in C^{1}_{T}$ of the equation
\[
\partial_{X}\mathcal{H}(\Psi^{\ast},0)(W)=Y.
\]
Now, by the Open Mapping Theorem, $\displaystyle{\big[\partial_{X}\mathcal{H}(\Psi^{\ast},0)\big]^{-1}}:C^{0}_{T}\to C^{1}_{T}$, $Y\to W$  is continuous implying that $\displaystyle{\partial_{X}\mathcal{H}(\Psi^{\ast},0)}$ is an isomorphism. In consequence, by the Implicit Function Theorem, there exists a neighborhood $\mathcal{V}\subset \R$ of $(0,0)$ and a unique $C^{1}$ function $\Gamma: \mathcal{V}\to C^{1}_{T}$ such that
\begin{equation}\label{funcion Phi}
\Gamma(0)=\Psi^{\ast}, \quad \text{and} \quad  \mathcal{H}(\Gamma(G),G)=0, 
\end{equation}
for all $G \in \mathcal{V}.$ The same conclusion is obtained if $\Psi^{\ast}=\Psi_{2}$. Finally, the proof  follows if we define $X_{G}=\Gamma(G).$
\end{proof}

\begin{remark}
By the Theorem of Continuous Dependence of Parameters, the $T$-periodic solution $X^{G}=X^{G}(t)$ with $X^{0}(t)=\Psi_{2}(t)$ $(resp. X^{0}(t)=\Psi_{1}(t)$) given by Theorem \ref{main-theo 1} is stable (unstable).  In addition, although this result is true for any fixed $d\in \mathbb{R}$, actually we are assuming $0\leq d<T$ (see Remark \ref{countable}). 
\end{remark}

In contrast with the existence of periodic solutions for \eqref{system nathanson_norm feedback} given by Theorem \ref{main-theo 1}, which is valid for any fixed value of the delay $d$ and with small gains $g_1$, $g_2$, in what follows we are interested in continuation of periodic solutions of \eqref{system nathanson_norm feedback} using the delay as a continuation parameter. Certainly, some appropriate conditions over the gains are expected.  To this end, we need the following preliminary results.

\begin{lemma}\label{lema cota de la derivada}
Let $\Psi(t)=(\psi(t),\dot{\psi}(t))^{tr}$ be a $T$-periodic solution of the differential system  \eqref{system nathanson_norm non-feedback} in the case $h_{D}(x,\dot{x})=c\dot{x}$. Assume that $\displaystyle{0<\varsigma\leq \psi(t)\leq \varrho}$ for all $t\in \R$. Then, $\displaystyle{|\dot{\psi}(t)|\leq \Lambda(\varsigma,\varrho)}$ for all $t\in \mathbb{R},$ where  
\begin{equation}\label{cota derivada}
\Lambda(\varsigma,\varrho):= \gamma(\varrho-\varsigma)+\max\left\{\Big|1-\varsigma-\frac{e V^{2}_{\delta,\min}}{\varrho^{2}}\Big|,\Big|1-\varrho-\frac{e V^ {2}_{\delta,\max}}{\varsigma^{2}}\Big|\right\} T.
\end{equation}
\end{lemma}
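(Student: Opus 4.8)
The plan is to turn the second-order equation for the first component of $\Psi$ into a first-order identity for $\dot\psi$ by exploiting $T$-periodicity, and then to estimate the resulting terms one at a time. Writing out the scalar equation contained in \eqref{system nathanson_norm non-feedback} with $h_{D}(x,\dot{x})=c\dot{x}$, the component $\psi$ satisfies
\[
\ddot{\psi}(t)+c\dot{\psi}(t)=1-\psi(t)-\dfrac{eV^{2}_{\delta}(t)}{\psi^{2}(t)}=:R(t).
\]
First I would bound the forcing term $R$. Using $\varsigma\le \psi(t)\le \varrho$ together with $V^{2}_{\delta,\min}\le V^{2}_{\delta}(t)\le V^{2}_{\delta,\max}$, and the monotonicity of $1-\psi$ in $\psi$ and of $eV^{2}_{\delta}/\psi^{2}$ in both $V^{2}_{\delta}$ and $\psi$, one gets the two-sided estimate
\[
1-\varrho-\dfrac{eV^{2}_{\delta,\max}}{\varsigma^{2}}\le R(t)\le 1-\varsigma-\dfrac{eV^{2}_{\delta,\min}}{\varrho^{2}},
\]
so that $|R(t)|\le M_{0}$, where $M_{0}$ is exactly the maximum appearing in the second summand of \eqref{cota derivada}.

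Next, since $\psi$ is $T$-periodic it satisfies $\psi(0)=\psi(T)$, so by Rolle's theorem there is a point $t_{\ast}$ with $\dot{\psi}(t_{\ast})=0$. The key observation is that, although $\ddot{\psi}=R-c\dot{\psi}$ still contains $\dot{\psi}$ on the right-hand side (so a naive integration of $\ddot{\psi}$ would be circular), keeping the combination $\ddot{\psi}+c\dot{\psi}$ together makes the damping contribution integrate exactly: $\int_{t_{\ast}}^{t}c\dot{\psi}\,ds=c(\psi(t)-\psi(t_{\ast}))$. Integrating the displayed equation from $t_{\ast}$ to $t$ and using $\dot{\psi}(t_{\ast})=0$ yields
\[
\dot{\psi}(t)=-c\bigl(\psi(t)-\psi(t_{\ast})\bigr)+\int_{t_{\ast}}^{t}R(s)\,ds.
\]
By periodicity of $\dot{\psi}$ it suffices to bound $|\dot{\psi}(t)|$ on a single period, so I may assume $|t-t_{\ast}|\le T$. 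Then the first term is controlled by $c(\varrho-\varsigma)$ because $\psi(t),\psi(t_{\ast})\in[\varsigma,\varrho]$, and the second by $M_{0}T$ from the $L^{\infty}$ bound on $R$. Adding the two gives $|\dot{\psi}(t)|\le c(\varrho-\varsigma)+M_{0}T=\Lambda(\varsigma,\varrho)$ (the leading coefficient being the linear damping constant of the case under consideration), which is the claim.

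The only genuinely delicate point is the circularity introduced by the damping term: one cannot estimate $\dot{\psi}$ merely by integrating $\ddot{\psi}$ unless $\dot{\psi}$ is already under control. This is resolved precisely by leaving $\ddot{\psi}+c\dot{\psi}$ grouped, so that the velocity contribution integrates to a difference of position values that is uniformly bounded by $\varrho-\varsigma$; the remainder is a routine combination of the mean value theorem and the uniform bound on the forcing.
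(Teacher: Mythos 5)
Your proof is correct and follows essentially the same route as the paper: both use periodicity to find a zero $t_{\ast}$ of $\dot{\psi}$, integrate the equation with the damping term kept grouped so that $\int_{t_{\ast}}^{t} c\dot{\psi}\,ds = c\bigl(\psi(t)-\psi(t_{\ast})\bigr)$, and then apply the same two-sided bound on the forcing term. Note also that your conclusion with leading coefficient $c$ matches the paper's own proof, the $\gamma$ appearing in \eqref{cota derivada} being a typographical slip in the statement for the linear damping case $h_{D}(x,\dot{x})=c\dot{x}$.
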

\begin{proof}
Notice that the function $\psi=\psi(t)$ satisfies the differential equation
\[
\ddot{\psi}=-c\dot{\psi}+1-\psi-\frac{e V_{\delta}^{2}(t)}{\psi^{2}},
\]
Since $\psi(t)$ is $T$-periodic, there exists $t_{0}\in [0,T]$ such that $\dot{\psi}(t_{0})=0$. Then integrating the equation between $t_{0}$ and $t$ we have
\[
\dot{\psi}(t)=-c\big(\psi(t)-\psi(t_{0})\big)+\int_{t_{0}}^{t}\big(1-\psi(s)-\frac{eV_{\delta}^{2}(s)}{\psi^{2}(s)}\Big)ds.
\]
Now by hypothesis, $0<\varsigma\leq \psi(t)\leq \varrho $ for all $t\in \R$, then
\[
1-\varsigma-\frac{eV_{\delta,\min}^{2}}{\varrho^{2}}>1-\psi(t)-\frac{eV_{\delta}^{2}(t)}{\psi^{2}(t)}>1-\varrho-\frac{eV_{\delta,\max}^{2}}{\varsigma^{2}}, 
\]
for all $t\in \mathbb{R}.$ From here we deduce that
\[
|\dot{\psi}(t)|\leq c(\varrho-\varsigma)+\max\left\{\Big|1-\varsigma-\frac{eV_{\delta,\min}^{2}}{\varrho^{2}}\Big|,\Big|1-\varrho-\frac{eV_{\delta,\max}^{2}}{\varsigma^{2}}\Big|\right\}T,
\]
for all $t\in \mathbb{R.}$
\end{proof}

\begin{lemma}\label{lema hill} Let $c>0$ and $a,b\in C^{1}(\mathbb{R}/T \Z)$. Assume that the following assumptions hold
\begin{enumerate}
\item[1.] $\displaystyle{4a(t)\geq (c+b(t))^{2}+2(\dot{b}(t))^{2}}$.
\item[2.] $\displaystyle{\|4a(t)-(c+b(t))^{2}-2(\dot{b}(t))^{2} \|_{L^{\infty}}\leq \Big(\frac{\pi}{T}\Big)^{2}.}$ 
\item[3.] $\displaystyle{c+\overline{b}\neq 0,}$ with $\displaystyle{\overline{b}=\frac{1}{T}\int_{0}^{T}b(t)dt}.$
\end{enumerate}
Then, the Hill equation
\begin{equation}\label{Complete Hill equation}
\ddot{y}+(c+b(t))\dot{y}+a(t)y=0,
\end{equation}
does not admit nontrivial $T$-periodic solutions.
\end{lemma}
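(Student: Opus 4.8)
The plan is to reduce the damped Hill equation \eqref{Complete Hill equation} to a \emph{self-adjoint} (undamped) one by the classical amplitude substitution, and then to translate the non-existence of $T$-periodic solutions into a non-resonance statement for the reduced equation. Concretely, writing $p(t)=c+b(t)$ and $q(t)=a(t)$, I would set $y(t)=\mu(t)z(t)$ with $\mu(t)=\exp\!\big(-\tfrac12\int_0^t p(s)\,ds\big)$, which annihilates the first-order term and yields $\ddot z+Q(t)z=0$ with $Q=a-\tfrac12\dot b-\tfrac14(c+b)^2$. The essential point is that $\mu$ is \emph{not} $T$-periodic: over one period it is multiplied by $\exp\!\big(-\tfrac T2(c+\ov b)\big)$. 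Hence a nontrivial $T$-periodic solution $y$ corresponds to a nontrivial solution $z$ of the reduced equation satisfying the Floquet-type condition $z(t+T)=\sigma z(t)$, $\dot z(t+T)=\sigma\dot z(t)$, with $\sigma=\exp\!\big(\tfrac T2(c+\ov b)\big)$. Here assumption~3 enters for the first time: since $c+\ov b\neq0$ we have $\sigma>0$ and $\sigma\neq1$.

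So the statement reduces to showing that \emph{$\sigma$ is not a Floquet multiplier of} $\ddot z+Qz=0$. Because the reduced equation has zero damping, its monodromy matrix has determinant $1$, so its two multipliers form a pair $\{\mu_\ast,\mu_\ast^{-1}\}$; a real positive $\sigma\neq1$ can be one of them only if the reduced equation is hyperbolic with $\operatorname{tr}M_z=\sigma+\sigma^{-1}=2\cosh\!\big(\tfrac T2(c+\ov b)\big)>2$. I would therefore control the reduced equation through its Prüfer angle: scaling by $\omega=\pi/T$ and writing $z=r\sin\phi$, $\dot z=\omega r\cos\phi$ gives $\dot\phi=\omega\cos^2\phi+\tfrac{Q}{\omega}\sin^2\phi$, so the rotation of $\phi$ over $[0,T]$ is governed by the size of $Q$ relative to $(\pi/T)^2$. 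Assumptions~1 and~2 are exactly what is needed to keep this rotation under control: assumption~1, $4a\ge(c+b)^2+2\dot b^2$, controls the potential from below, while assumption~2 bounds the excess $4a-(c+b)^2-2\dot b^2$ by $(\pi/T)^2$; the mismatch between the linear term $\tfrac12\dot b$ appearing in $Q$ and the quadratic term $\tfrac12\dot b^2$ appearing in the hypotheses is reconciled by a Young-type inequality. This mirrors, and extends to variable damping, the constant-damping non-resonance criterion recorded in Remark~\ref{non nontrivial periodic sol}.

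Assembling the pieces, the rotation estimate keeps $\operatorname{tr}M_z$ strictly below the resonance value $2\cosh(\tfrac T2(c+\ov b))$ (equivalently, it confines the reduced equation to a stability band in which the multipliers are non-real or equal to $1$), so $\sigma$ cannot be a multiplier and \eqref{Complete Hill equation} has no nontrivial $T$-periodic solution. I expect the main obstacle to be precisely this quantitative step: converting the two \emph{pointwise} bounds on $a,b,\dot b$ into the \emph{global} monodromy inequality, i.e.\ showing that the reduced potential — after the Young-type rearrangement that produces the $2\dot b^2$ term — keeps the rotation number over a period below $\pi$, while simultaneously tracking the non-periodic weight $\mu$ so that the Floquet boundary terms are handled correctly. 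The role of assumption~3 is subtle but indispensable: it is what guarantees $\sigma\neq1$, ruling out the degenerate resonant case that a purely undamped argument could not exclude.
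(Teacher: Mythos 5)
Your strategy coincides with the paper's own proof in all structural respects: the same exponential (Liouville) substitution $z(t)=y(t)\exp\bigl(\tfrac12\int_0^t(c+b(s))\,ds\bigr)$ reducing \eqref{Complete Hill equation} to an undamped Hill equation $\ddot z+Q(t)z=0$, the same use of the first stability zone via the Lyapunov--Borg criterion $0<Q\le(\pi/T)^2$, and the same role for assumption 3, which produces the factor $\sigma=\exp\bigl(\tfrac T2(c+\overline b)\bigr)\ne 1$. Your packaging through Floquet multipliers (determinant $1$, so a real $\sigma>0$, $\sigma\ne1$ forces $\operatorname{tr}M_z=\sigma+\sigma^{-1}>2$, incompatible with the stability zone) is equivalent to the paper's contradiction, which instead notes that $z$ is bounded by Lyapunov--Borg while $z(t_0+nT)=\sigma^n z(t_0)$ blows up along a subsequence; and your Pr\"ufer-angle step is in effect a proof of the Borg criterion that the paper simply cites from \cite{cabada,Zhang-Li}. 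Moreover, your computation of the reduced potential, $Q=a-\tfrac12\dot b-\tfrac14(c+b)^2$, is the correct one.

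The genuine gap is precisely the step you deferred to ``a Young-type inequality'': it cannot be closed. The hypotheses control only $E(t):=4a-(c+b)^2-2(\dot b)^2\in[0,(\pi/T)^2]$, whereas the criterion requires $0<4Q=E+2(\dot b)^2-2\dot b\le 4(\pi/T)^2$ for the true $Q$. The deficit $2\dot b-2(\dot b)^2$ is positive whenever $\dot b(t)\in(0,1)$, with maximum $\tfrac12$ at $\dot b=\tfrac12$; Young's inequality only yields $Q\ge \tfrac{E}{4}-\tfrac18$, which is not positive under assumption 1. Concretely, take $b(t)=\tfrac{T}{4\pi}\sin(2\pi t/T)$ and $a=\tfrac14\bigl((c+b)^2+2(\dot b)^2\bigr)$, so that $E\equiv 0$ and assumptions 1--3 all hold; then $4Q(0)=2(\tfrac12)^2-2\cdot\tfrac12=-\tfrac12<0$, so the lower Borg bound fails, and since the hypotheses place no bound on $|\dot b|$ itself, the upper bound $4Q\le4(\pi/T)^2$ can fail as well where $\dot b$ is very negative. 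It is worth knowing that the paper's own proof sails past this exact point by asserting, via ``a direct computation,'' that the reduced potential is $a-\tfrac14(c+b)^2-\tfrac12(\dot b)^2$ --- i.e., with $(\dot b)^2$ where the substitution actually produces $\dot b$ --- so that the stated hypotheses fit that expression verbatim; your correct computation exposes that assumptions 1--2 as written do not feed the criterion, and the repair is to restate them with $2\dot b$ in place of $2(\dot b)^2$, not to interpolate by Young. A minor additional slip: on the boundary of the stability zone the multipliers can be $-1$ as well as $1$ (trace $\pm2$), not ``non-real or equal to $1$,'' though this does not affect the exclusion of a positive $\sigma\ne1$.
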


\begin{proof}Let $y(t)$ any nontrivial solution of \eqref{Complete Hill equation}. A direct computation shows that the function $z=z(t)$ given by
\[
z(t)=y(t)w(t), \quad \text{with} \quad w(t)=\exp\left\{\frac{1}{2}\int_{0}^{t}(c+b(s))ds\right\},
\]
satisfies the Hill equation
\begin{equation}\label{Canonical Hill equation}
\ddot{z}+Q(t)z=0, \quad \text{with} \quad Q(t)=a(t)-\frac{1}{4}(c+b(t))^{2}-\frac{1}{2}(\dot{b}(t))^{2}.
\end{equation}
By assumptions 1. and 2. the function $Q(t)$ has the properties
\[
0<Q(t), \quad \text{and} \quad \|Q\|_{L^{\infty}}\leq \Big(\frac{\pi}{T}\Big)^{2}.
\]
Then, by the Lyapunov-Borg criteria (see \cite{cabada,Zhang-Li}) the solutions of \eqref{Canonical Hill equation} are bounded. 
On the other hand, for all $t\in \mathbb{R}$ we have
\[
w(t+T)=\upsilon w(t), \quad \text{with} \quad \upsilon= \exp\left\{\frac{T(c+\overline{b})}{2}\right\}.
\]
From here and assumption 2. we deduce that $w(t)$ is not bounded. Finally, if $y(t)$ is a bounded solution of \eqref{Complete Hill equation} then $w(t)$ is bounded, which is a contradiction.
In particular, $y(t)$ cannot be periodic unless it is trivial.
\end{proof} 

Now we are able to prove the local continuation of periodic solutions using the delay as a continuation parameter. Recall that, from Theorem \ref{Periodic solutions existence}, there exist exactly two periodic solutions $\Psi_{i}(t)=(\psi_{i}(t),\dot{\psi}_{i}(t))^{tr}$, $i=1,z$ of \eqref{system nathanson_norm non-feedback}. Moreover, in the case $h_{D}(x,\dot{x})=c\dot{x}$ we have
\[
\eta_{1}<\psi_{1}(t)<\xi_{1}, \quad \text{and} \quad \xi_{2}<\psi_{2}(t)<\eta_{2},
\]
for all $t\in \R$, where $\xi_{i}$ and $\eta_{i}$, $i=1,2$ are the solutions in $]0,1[$ of the equation
\begin{equation}\label{ecuaciones de los super y sub soluciones}
(1-x)x^{2}=eV^{2}_{\delta,\max}, \quad (1-x)x^{2}=eV^{2}_{\delta,\min},
\end{equation}
respectively. Let us consider a continuation result from the starting periodic function $\Psi_{2}(t)$. An analogous result can be obtained starting from $\Psi_{1}(t)$. 

\begin{theorem}[Continuation of periodic solutions over the delay]\label{main-theo 2}
Let $\Psi_{2}(t)=(\psi_{2}(t),\dot{\psi}_{2}(t))^{tr}$ be the $T$-periodic solution of \eqref{system nathanson_norm non-feedback} given by Theorem \ref{Periodic solutions existence} in the case $h_{D}(x,\dot{x})=c\dot{x},$ $c>0.$ For $g_{2}>0$, let us define 
\[
b^{\ast}=\frac{2g_{2}(1-\xi_{2})}{V_{\delta,\max}}, \quad \text{and} \quad \dot{b}^{\ast}=\frac{2eg_{2}}{\xi^{3}_{2}}\Big(\xi_{2}\|\dot{V}_{\delta}\|_{\infty}+2V_{\delta,\max}\Lambda(\xi_{2},\eta_{2})\Big), 
\]
with $\displaystyle{\Lambda(\xi_{2},\eta_{2}})$ given as in Lemma \ref{lema cota de la derivada}. Then, there exists a neighborhood $\Omega \subset \mathbb{R}$ of $d=0$ and a unique function $X(t,d)$, that is a $T$-periodic solution of \eqref{system nathanson_norm feedback} for all $d\in \Omega$, i.e., 
{\renewcommand{\labelitemi}{$\triangleright$}
\begin{itemize}
    \item $\dot{X}(t,d)=F(t,X(t,d),X_{d}(t,d)), \quad X(t+T,d)=X(t,d),$ for all $d \in \Omega.$
    \item $X(t,0)=\Psi_{2}(t)$ for all $t\in \mathbb{R}$, 
\end{itemize}}
if $g_{2}>0$ and one of the following conditions hold
\begin{itemize}
\item[a)] If $g_{1}>0$:
\begin{equation}\label{condition for hill}
\frac{1}{4}(c+b^{\ast})^{2}+\frac{1}{2}(\dot{b}^{\ast})^{2}\leq \frac{2g_{1}(1-\eta_{2})}{V_{\delta,\min}}+\frac{3\xi_{2}-2}{\xi_{2}}, \quad \text{and} \quad \frac{2g_{1}(1-\xi_{2})}{V_{\delta,\max}}+\frac{3\eta_{2}-2}{\eta_{2}}<\Big(\frac{\pi}{T}\Big)^{2}+\frac{c^{2}}{4}.
\end{equation}
\item[b)] If $g_{1}<0$,:
\[
\frac{1}{4}(c+b^{\ast})^{2}+\frac{1}{2}(\dot{b}^{\ast})^{2}\leq \frac{2g_{1}(1-\xi_{2})}{V_{\delta,\max}}+\frac{3\xi_{2}-2}{\xi_{2}}, \quad \text{and} \quad \frac{2g_{1}(1-\eta_{2})}{V_{\delta,\min}}+\frac{3\eta_{2}-2}{\eta_{2}}<\Big(\frac{\pi}{T}\Big)^{2}+\frac{c^{2}}{4}.
\]
\end{itemize}
Moreover, if $g_{2}<0$ and the damping coefficient $c$ satisfies
\begin{equation}\label{condiciones sobre c}
c+\frac{2g_{2}(1-\eta_{2})}{V_{\delta,\min}}<0,  \quad \text{or} \quad c+\frac{2g_{2}(1-\xi_{2})}{V_{\delta,\max}}>0,
\end{equation}
and one of the following conditions hold
\begin{itemize}
\item[c)] For $g_{1}>0$:
\begin{equation*}
\begin{split}
0<g_{2}+\frac{c V_{\delta,\max}}{1-\xi_{2}}, & \qquad  \frac{1}{4}(c+b^{\ast})^{2}+\frac{1}{2}(\dot{b}^{\ast})^{2}<\frac{2g_{1}(1-\eta_{2})}{V_{\delta,\min}}+\frac{3\xi_{2}-2}{\xi_{2}} \quad \text{and}\\
&\frac{2g_{1}(1-\xi_{2})}{V_{\delta,\max}}+\frac{3\eta_{2}-2}{\eta_{2}}<\Big(\frac{\pi}{T}\Big)^{2}+\frac{c^{2}}{4}.
\end{split}
\end{equation*}
\item[d)] For $g_{1}<0$:
\begin{equation*}
\begin{split}
0<g_{2}+\frac{c V_{\delta,\max}}{1-\xi_{2}}, & \qquad  \frac{1}{4}(c+b^{\ast})^{2}+\frac{1}{2}(\dot{b}^{\ast})^{2}<\frac{2g_{1}(1-\xi_{2})}{V_{\delta,\max}}+\frac{3\xi_{2}-2}{\xi_{2}} \quad \text{and}\\
&\frac{2g_{1}(1-\eta_{2})}{V_{\delta,\min}}+\frac{3\eta_{2}-2}{\eta_{2}}<\Big(\frac{\pi}{T}\Big)^{2}+\frac{c^{2}}{4}.
\end{split}
\end{equation*}
\end{itemize}
we obtained the same conclusion.
\end{theorem}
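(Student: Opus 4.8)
The plan is to run the Implicit Function Theorem scheme of Theorem \ref{main-theo 1}, but now with the gains $g_1,g_2$ frozen (subject to the stated sign hypotheses) and the delay $d$ promoted to the continuation parameter. In the Banach spaces $C^1_T$ and $C^0_T$ of the proof of Theorem \ref{main-theo 1}, I would define $\mathcal{H}\colon C^1_T\times\mathbb{R}\to C^0_T$ by $\mathcal{H}(X,d):=\dot X-F(\cdot,X,X_d)$, with $X_d(t)=X(t-d)$ and $F$ the right-hand side of \eqref{system nathanson_norm feedback} for $h_D(x,\dot x)=c\dot x$. Since the feedback terms $s_1=g_1(x-x_d)$ and $s_2=g_2(\dot x-\dot x_d)$ vanish identically at $d=0$, the relation $\mathcal{H}(X,0)=0$ is exactly \eqref{system nathanson_norm non-feedback}, so $\mathcal{H}(\Psi_2,0)=0$ is the natural base point. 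The reason to take the domain $C^1_T$ rather than $C^0_T$ is the differentiability in the delay: for $X\in C^1_T$ the shift $d\mapsto X_d$ is differentiable with $\partial_d X_d=-\dot X(\cdot-d)\in C^0_T$, so that $\mathcal{H}$ is continuously Fr\'echet differentiable on $C^1_T\times\mathbb{R}$.

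Granting the regularity, the theorem reduces to showing that $\partial_X\mathcal{H}$ is an isomorphism at the base point and along the branch; the Implicit Function Theorem then produces a neighborhood $\Omega$ of $d=0$ and a unique $C^1$ curve $d\mapsto X(\cdot,d)$ with $X(\cdot,0)=\Psi_2$ and $\mathcal{H}(X(\cdot,d),d)=0$, which is precisely the asserted family of $T$-periodic solutions of \eqref{system nathanson_norm feedback}. As in Theorem \ref{main-theo 1}, $\partial_X\mathcal{H}$ has the form $Y\mapsto\dot Y-\mathcal{A}(t)Y-\mathcal{B}(t)Y(\cdot-d)$ with $T$-periodic $\mathcal{A},\mathcal{B}$, and by the Fredholm alternative together with the open mapping theorem it suffices to prove injectivity, i.e. that the associated linear delay system carries no nontrivial $T$-periodic solution. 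Passing to its scalar second-order form and estimating the delayed feedback contributions by their sup-norms along the localized solution, I would dominate this question by the Hill equation $\ddot y+(c+b(t))\dot y+a(t)y=0$ whose coefficients carry the gains, namely $b(t)=2eV_\delta(t) g_2/\psi_2^2(t)$ and $a(t)=1-2eV_\delta^2(t)/\psi_2^3(t)+2eV_\delta(t) g_1/\psi_2^2(t)$.

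The heart of the argument, and the step I expect to be the main obstacle, is verifying the three hypotheses of Lemma \ref{lema hill} for this Hill equation. Here I would invoke the localization $\xi_2\le\psi_2(t)\le\eta_2$ from Theorem \ref{Periodic solutions existence} and the velocity bound $|\dot\psi_2|\le\Lambda(\xi_2,\eta_2)$ from Lemma \ref{lema cota de la derivada} to replace the variable coefficients by the constants $b^\ast$ and $\dot b^\ast$: using $(1-\xi_2)\xi_2^2=eV^2_{\delta,\max}$ and $(1-\eta_2)\eta_2^2=eV^2_{\delta,\min}$ one checks that $b(t)$ and $\dot b(t)$ are dominated by $b^\ast,\dot b^\ast$, that $a(t)$ is bounded below by $\frac{2g_1(1-\eta_2)}{V_{\delta,\min}}+\frac{3\xi_2-2}{\xi_2}$ and above by $\frac{2g_1(1-\xi_2)}{V_{\delta,\max}}+\frac{3\eta_2-2}{\eta_2}$. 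The sign of each gain decides which endpoint of $[\xi_2,\eta_2]$ and which of $V_{\delta,\min},V_{\delta,\max}$ is extremal, and this is exactly the dichotomy in the sign of $g_1$ (resp. $g_2$) recorded in (a)--(d): with these substitutions, hypotheses~1 and~2 of Lemma \ref{lema hill} become the two displayed inequalities of the pertinent case, yielding $0<Q(t)$ and $\|Q\|_\infty\le(\pi/T)^2$ for the transformed potential (for $g_2>0$ this uses $(c+b(t))^2\ge c^2$ to absorb $a_{\max}-c^2/4<(\pi/T)^2$). Hypothesis~3, $c+\overline{b}\ne0$, is automatic when $g_2>0$ since then $b>0$, whereas for $g_2<0$ it is secured by forcing $c+b(t)$ to keep one sign, which is precisely the content of \eqref{condiciones sobre c}.

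Once the Hill equation is shown to admit no nontrivial $T$-periodic solution, $\partial_X\mathcal{H}$ is injective, hence an isomorphism, and the Implicit Function Theorem delivers the branch $X(\cdot,d)$ on a neighborhood $\Omega$ of $d=0$; shrinking $\Omega$ keeps $X(\cdot,d)$ positive and $C^1$-close to $\Psi_2$, so it solves \eqref{system nathanson_norm feedback}. The continuation issued from $\Psi_1$ is entirely analogous, only the localization interval changing. The delicate points I would watch are the joint regularity of $\mathcal{H}$ in $(X,d)$ (which forces the domain $C^1_T$) and the bookkeeping of the four sign cases, each of which reroutes the extremal bounds between $\xi_2,\eta_2$ and $V_{\delta,\min},V_{\delta,\max}$ when checking the hypotheses of Lemma \ref{lema hill}.
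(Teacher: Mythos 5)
Your proposal is essentially the paper's own proof: you set up the same functional $\mathcal{F}(X,d)=\dot X-F(\cdot,X,X_d)$ on $C^{1}_{T}\times\mathbb{R}$, reduce invertibility of the linearization at $(\Psi_{2},0)$ to the very same Hill equation $\ddot y+(c+b(t))\dot y+a(t)y=0$ with $a(t)=1-2eV_{\delta}^{2}(t)/\psi_{2}^{3}(t)+2eg_{1}V_{\delta}(t)/\psi_{2}^{2}(t)$ and $b(t)=2eg_{2}V_{\delta}(t)/\psi_{2}^{2}(t)$, obtain the bounds $a_{\ast},a^{\ast},b^{\ast},\dot b^{\ast}$ from the localization $\xi_{2}\leq\psi_{2}(t)\leq\eta_{2}$, Lemma \ref{lema cota de la derivada} and the relations \eqref{ecuaciones de los super y sub soluciones}, verify the hypotheses of Lemma \ref{lema hill} through the identical sign dichotomy (a)--(d) with \eqref{condiciones sobre c} securing $c+\overline{b}\neq 0$ when $g_{2}<0$, and close with the same Fredholm-alternative/open-mapping/Implicit Function Theorem step as in Theorem \ref{main-theo 1}. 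The only differences are cosmetic: you make explicit the $C^{1}_{T}$-regularity of the shift $(X,d)\mapsto X_{d}$ (with $\partial_{d}X_{d}=-\dot X(\cdot-d)$) that the paper leaves implicit, and your gain-carrying linearized coefficients at $d=0$ are exactly the ones the paper's proof works with.
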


\begin{proof}
The proof follows the same ideas given in the proof of Theorem 
\ref{main-theo 1}, one of the main difference is that now we consider the functional $\mathcal{F}:C^{1}_{T}\times \mathbb{R} \to C^{0}_{T}$ while in Theorem \ref{main-theo 1} we considered the Banach spaces $C^{1}_{T}\times \mathbb{R}^2$ and $C^{0}_{T}$. 
\[
\mathcal{F}(X,d):=\dot{X}-F(\cdot,X,X_{d},d), \quad F(t,X,X_{d},d)=\begin{pmatrix}
\dot{x}\\
1-\frac{e \mathcal{V}^2(t,s_{1}(x,x_{d},g_{1}),s_{2}(\dot{x},\dot{x}_{d},g_{2}))}{x^2}-x-c\dot{x}
\end{pmatrix}
\]

As before, the equation $\mathcal{F}(X,0)=0$ has two nontrivial $T$-periodic solutions given by  $\Psi_{1,2}=\Psi_{1,2}(t)$. In particular, if $\Psi^{\ast}(t)=\Psi_{2}(t)$, the linear operator $\displaystyle{\partial_{X}\mathcal{F}(\Psi^{\ast},0):C^{1}_{T}\to C^{0}_{T}}$ given by
\[
\begin{split}
\partial_{X}\mathcal{F}(\Psi^{\ast},0)(X(t))&=\dot{X}(t)-\partial_{X}F(t,\Psi^{\ast},\Psi^{\ast}_{0},0)X(t),\\
&=\dot{X}(t)-\begin{pmatrix}
0& 1 \\
\frac{2e V_{\delta}^{2}(t)}{\psi^{3}_{2}(t)}-\frac{2eg_{1}V_{\delta}(t)}{\psi^{2}_{2}(t)}-1& -c-\frac{2eg_{2}V_{\delta}(t)}{\psi^{2}_{2}(t)}
\end{pmatrix}X(t).
\end{split}
\]
The objective is to prove that $\displaystyle{\partial_{X}\mathcal{F}(\Psi^{\ast},0)}$ is inyective. To this end, let us consider the equation $\displaystyle{\partial_{X}\mathcal{F}(\Psi^{\ast},0)(Y(t))=0,}$ which is equivalent to the Hill equation  \begin{equation}\label{ecuación de hill-nathanson}
    \ddot{y}+(c+b(t))\dot{y}+a(t)y=0,
\end{equation}
with
\[
a(t)=1-\frac{2e V_{\delta}^{2}(t)}{\psi^{3}_{2}(t)}+\frac{2eg_{1}V_{\delta}(t)}{\psi^{2}_{2}(t)}, \quad \text{and} \quad b(t)=\frac{2eg_{2}V_{\delta}(t)}{\psi^{2}_{2}(t)}.
\]
By Theorem \ref{Periodic solutions existence} the $T$-periodic function $\psi_{2}(t)$ satisfies
\[
\xi_{2}\leq \psi_{2}(t)\leq \eta_{2}, \quad \forall t \in \mathbb{R}.
\]
where $\xi_{2}$, $\eta_{2}$ are the solutions in $[2/3,1]$ of 
\[
(1-x)x^{2}=eV^{2}_{\delta,\max}, \quad (1-x)x^{2}=eV^{2}_{\delta,\min},
\]
respectively.  At this point we assume that $g_{1}, g_{2}$ are both positive so:

\textbf{case a)}. Computations show that
\[
0<a_{\ast}<a(t)<a^{\ast}, \quad \forall t\in \mathbb{\R}, 
\]
with
\[
a_{\ast}:=\frac{2g_{1}(1-\eta_{2})}{V_{\delta,\min}}+\frac{3\xi_{2}-2}{\xi_{2}},\quad \text{and} \quad a^{\ast}:=\frac{2g_{1}(1-\xi_{2})}{V_{\delta,\max}}+\frac{3\eta_{2}-2}{\eta_{2}}.
\]
Also, we have 
\[
0<b_{\ast}<b(t)<b^{\ast}, \quad \forall t\in \mathbb{\R}, 
\]
with
\[
b_{\ast}:=\frac{2g_{2}(1-\eta_{2})}{V_{\delta,\min}}, \quad \text{and} \quad b^{\ast}:=\frac{2g_{2}(1-\xi_{2})}{V_{\delta,\max}}.
\]
Likewise, for the function 
\[
\dot{b}(t)=2eg_{2}\Big(\frac{\dot{V}_{\delta}(t)}{\psi_{2}^{2}(t)}-2\frac{V_{\delta}(t)\dot{\psi}(t)}{\psi_{2}^{3}(t)}\Big),
\]
we have
\[
|\dot{b}(t)|\leq \dot{b}^{\ast}:= \frac{2eg_{2}}{\xi^{3}_{2}}\Big(\xi_{2}\|\dot{V}_{\delta}\|_{\infty}+2V_{\delta,\max}\Lambda(\xi_{2},\eta_{2})\Big), \quad \forall t\in \mathbb{\R},
\]
where $\displaystyle{\Lambda(\xi_{2},\eta_{2})}$ is given as in Lemma \ref{lema cota de la derivada}. Then, by the assumptions \eqref{condition for hill} it follows that
\[
Q(t):=a(t)-\frac{1}{4}(c+b(t))^{2}-\frac{1}{2}(\dot{b}(t))^{2}>a_{\ast}-\frac{1}{4}(c+b^{\ast})^{2}-\frac{1}{2}(\dot{b}^{\ast})^{2}>0, \quad \forall t\in \mathbb{R},
\]
and
\[
\|Q\|_{L^{\infty}}=\|a(t)-\frac{1}{4}(c+b(t))^{2}-\frac{1}{2}(\dot{b}(t))^{2}\|_{L^{\infty}}<\|a(t)-\frac{c^{2}}{4}\|_{L^{\infty}}<\Big(\frac{\pi}{T}\Big)^{2}.
\]
Since $b(t)>0$ then $c+\overline{b}>0$, in consequence by Lemma \ref{lema hill} the equation \eqref{ecuación de hill-nathanson} does not admit nontrivial $T$-periodic solutions, implying that $\partial_{X}\mathcal{F}(\Psi^{\ast},0)$ is injective. The proof proceeds in the same fashion as the proof of Theorem \ref{main-theo 1}, showing the existence of a small neighborhood $\Omega$ of $d=0$ and a unique function $X(\cdot,d):\R\to \R$ such that
\[
\dot{X}(t,d)=F(t,X(t,d),X_{d}(t,d)), \quad X(t+T,d)=X(t,d),\quad \forall  t\in \R, \, \forall d \in \Omega,
\]
i.e., $X(t,d)$ is a $T$-solution of \eqref{system nathanson_norm feedback}. Moreover, $X(t,0)=\Psi_{2}(t)$ for all $t\in \R.$

\textbf{case b)}. The proof proceeds analogously at the previous one.

\textbf{case c)}. If $g_{2}<0$ notice that $b(t)<0$ and moreover
\[
c+b^{\ast}<c+\overline{b}<c+b_{\ast}<c,
\]
therefore any of the inequalities in \eqref{condiciones sobre c} will imply that $c+\overline{b}\neq 0.$ Thus, the assumption 2. of Lemma \ref{lema hill} is fulfill. From this the rest of the proof is verbatim the case $a)$ under the respective and additional assumptions.

\textbf{case d)}. The proof proceeds analogously to the previous one.
\end{proof}

\section*{4. Numerical validation}

As a final contribution, we present some numerical computations in order to validate the main results of the document concerning the existence and stability of periodic solutions of \eqref{system nathanson_norm feedback} and the stability of the equilibrium $(x_{2},0)$ of (\ref{eq:delayed system}). In Table \ref{tab:parametros} we list  the values of the parameters that we have taken from \cite{Younis-2010,Younis-2013} for the delayed Nathanson's equation
\begin{equation}
\label{eq:nathanson}
m y^{\prime \prime}(\tau) + \xi y^{\prime}(\tau)+ k y(\tau) = \dfrac{\epsilon A \tilde{V}_{\delta}^{2}(\tau)}{2(l-y(\tau))^2}, \qquad \tilde{V}_{\delta}(\tau)=V_{\delta}(\tau)+\sum_{i=0}^{1}\tilde{G}_{i}\big(y^{i)}_{\tilde{d}}(\tau)-y^{i)}(\tau)\big)
\end{equation}
Introducing the following dimensionless variables 
\[
\label{eq:norm_var}
y= (1-x)l, \qquad \tau = t \mathcal{T}, \qquad \tilde{d}=d\mathcal{T}, \qquad \mathcal{T}=\sqrt{m/k},
\]
the corresponding dimensionless equation from \eqref{eq:nathanson} is the equation \eqref{eq:nathanson_norm feedback 2} with
\[
c=\dfrac{\xi}{\sqrt{m k}}, \quad e = \dfrac{\epsilon A}{2kd^3}, \quad G_{1}=\tilde{G}_{1}l,\quad G_{2}=\dfrac{\tilde{G}_{2}l}{\mathcal{T}}, \quad \text{and} \quad  \mathcal{V}_{\delta}(t) = \tilde{V}_{\delta}( t\mathcal{T}).
\]
Let us assume $V_{\delta}(t)=20+\delta\cos(\Omega t)$ with $\Omega<2.$ Then, for the autonomous case ($\delta=0$) we compute the equilibrium points  given as solutions $x_{1},x_{2}$ of the equation $x^{2}(1-x)=400e.$
\begin{table}[h]
	\begin{center}
	\begin{normalsize}
	\begin{tabular}{|c|c|c|c|}
		\hline
		$m$ [Kg] & $l$ [m]& $A$ [m$^{2}$]& $k$ [N/m]\\
		\hline
		$21 \times 10^{-5}$ & $3.8 \times 10^{-5}$ & $3.96\times 10^{-5}$ & $320$\\
	 \hline
	 $\xi$ [N s m$^{-1}$]	& $\varepsilon$ [F/m] & $G_{1}$[V m$^{-1}$] & $G_{2}$[V s m$^{-1}$]\\
	 \hline
	 $0.0014$ & $8.85\times 10^{-12}$ & 8 & 8 \\ \hline
		\end{tabular}
		\end{normalsize}
		\end{center}
		\caption{Values of the parameters for equation \eqref{eq:nathanson}.}	\label{tab:parametros}
	    \end{table}

\begin{table}[h]
	\begin{center}
	\begin{normalsize}
	\begin{tabular}{|c|c|c|c|c|c|}
		\hline
		$c$ & $e$ & $x_{1}$ & $x_{2}$ & $G_{1}$& $G_{2}$\\
		\hline
		$5.4 \times 10^{-3}$ & $9.9 \times 10^{-6}$ & $6.5 \times 10^{-2}$ & $9.95 \times 10^{-1}$ & $3\times 10^{-4}$ & $3.7\times 10^{-1}$\\
	 \hline
		\end{tabular}
		\end{normalsize}
		\end{center}
		\caption{Values of the dimensionless parameters for equation \eqref{eq:nathanson_norm feedback 2} based on the data in Table \ref{tab:parametros}.}	\label{tab:parametros2}
	    \end{table}

Using the values in Table \ref{tab:parametros2} the parameters in Theorem \ref{delay d0}, for the case of linear damping are presented in Figure \ref{tab:lindam-values}. As is indicated in Proposition \ref{Prop 1}, DC-voltage source takes values 
\begin{figure}[h!]
\centerline
{\includegraphics[scale=0.7]{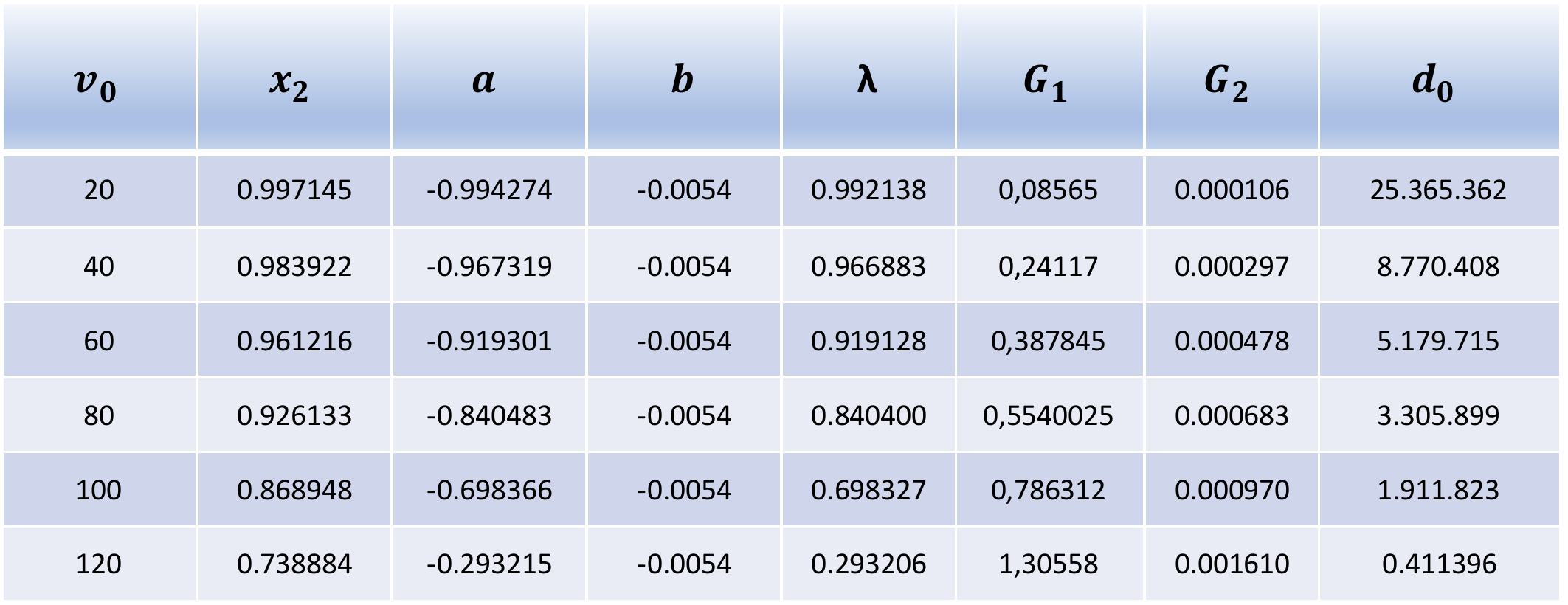}}
\caption{ 
Values for the parameters in Theorem \ref{delay d0}, for the case of a linear damping. 
}
\label{tab:lindam-values}
\end{figure}
between $0<v_0<122.33$, then taking values that vary from $10$ to $120$ and approaching the value of $x_2$ with six decimal places (the appreciable error is above the fourth decimal place). In this case, the value of the parameter $b$ is constant and we observe how the delay parameter $d_0$ decreases as the DC-voltage source increases its value.

For a squeeze film damping the parameters in Theorem \ref{delay d0} are presented in Figure \ref{tab:squeezedam-values}. At this case the value of the parameter $b$ is given by the function $-\dfrac{\lambda}{(1-x)^3}$, for the damping coefficient we have taken the value $\lambda=3\times 10^{-4}$ (see \cite{Lu_2021} for a deep discussion of the experimental obtaining of these values) and we observe how the delay parameter $d_0$ behaves like a Gaussian bell as DC-voltage source increases their value.

\begin{figure}[h!]
\centerline
{\includegraphics[scale=0.6]{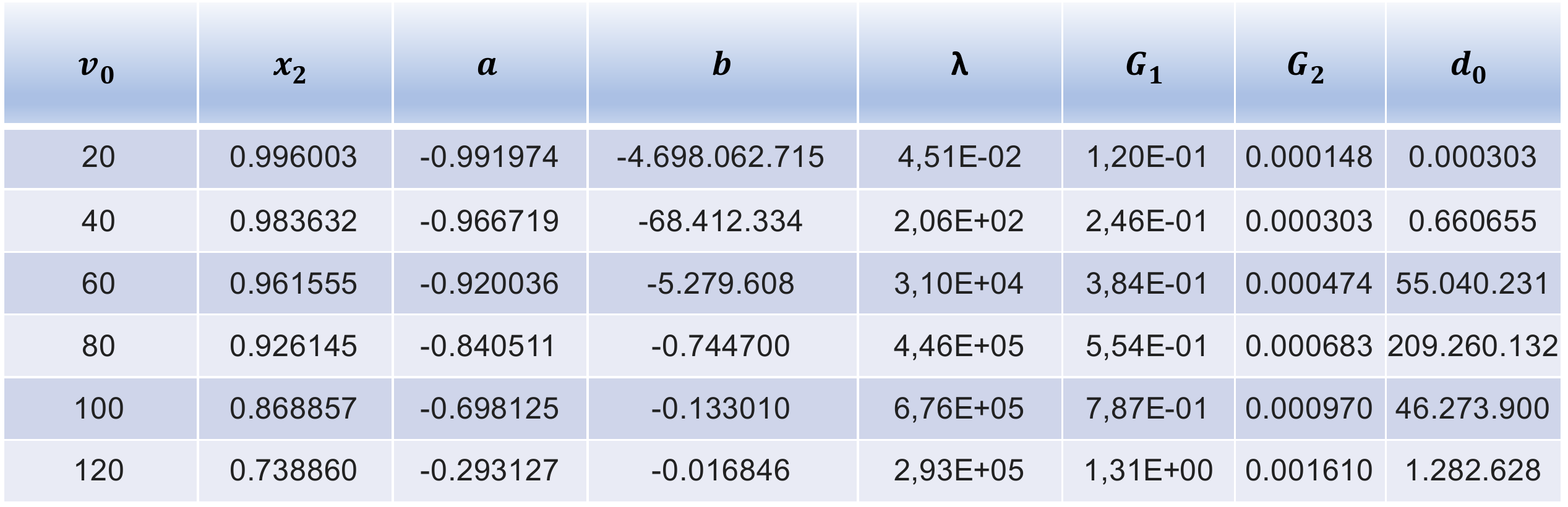}}
\caption{ 
Values for the parameters in Theorem \ref{delay d0}, for the case of a squeeze film damping force. 
}
\label{tab:squeezedam-values}
\end{figure}

As a result of the numerical analysis, we observe that for a delay values much greater than the one we predicted in our theoretical results, the asymptotic stability of the equilibrium point $(0, x_2)$ is maintained. To see this visually, we consider that  gain constants $G_1$ and $G_2$ take both, positive and negative values, as small or as large as required, this according to the different measures that are involved in the components of the MEMS.

In all the situations presented below for the autonomous case, our fixed frame will be considered equation \eqref{eq:nathanson_norm feedback 2} with linear damping for a constant $c= 0.0054$ and a fixed DC voltage value of $v_0=20$. With this we present the behavior of equation \eqref{eq:nathanson_norm feedback 2} in a neighborhood of the equilibrium point $(0, x_2)$, drawing the solutions for two initial conditions $(0, 0.2)$ and $(0, -0.2)$ which always appear one in red and one in blue. Although for practical terms it may not be fully applicable, to ensure the visual effect of the graphics, we have considered fairly large values of gain (up to 120 for $G_1$ and $G_2$) and delay (up to $d=300$ ) constants.

First, let us consider a case with fixed delay only on speed ($G_1=0$) and a positive gain constant $G_2$. The main observation is that as is observed in Figure \ref{caso:G1=0}, comparing the two graphs, a value $G_2$ is reached ($G_2=123$) for which the system changes from having an asymptotically stable equilibrium point, to one for which only stability can be assured. In Figure \ref{caso:G1=0-1} we can observe this situation for long term behavior of the initial conditions. The same result is observable if the delay is fixed but only in the position.

\begin{figure}
\centerline
{\includegraphics[scale=0.52]{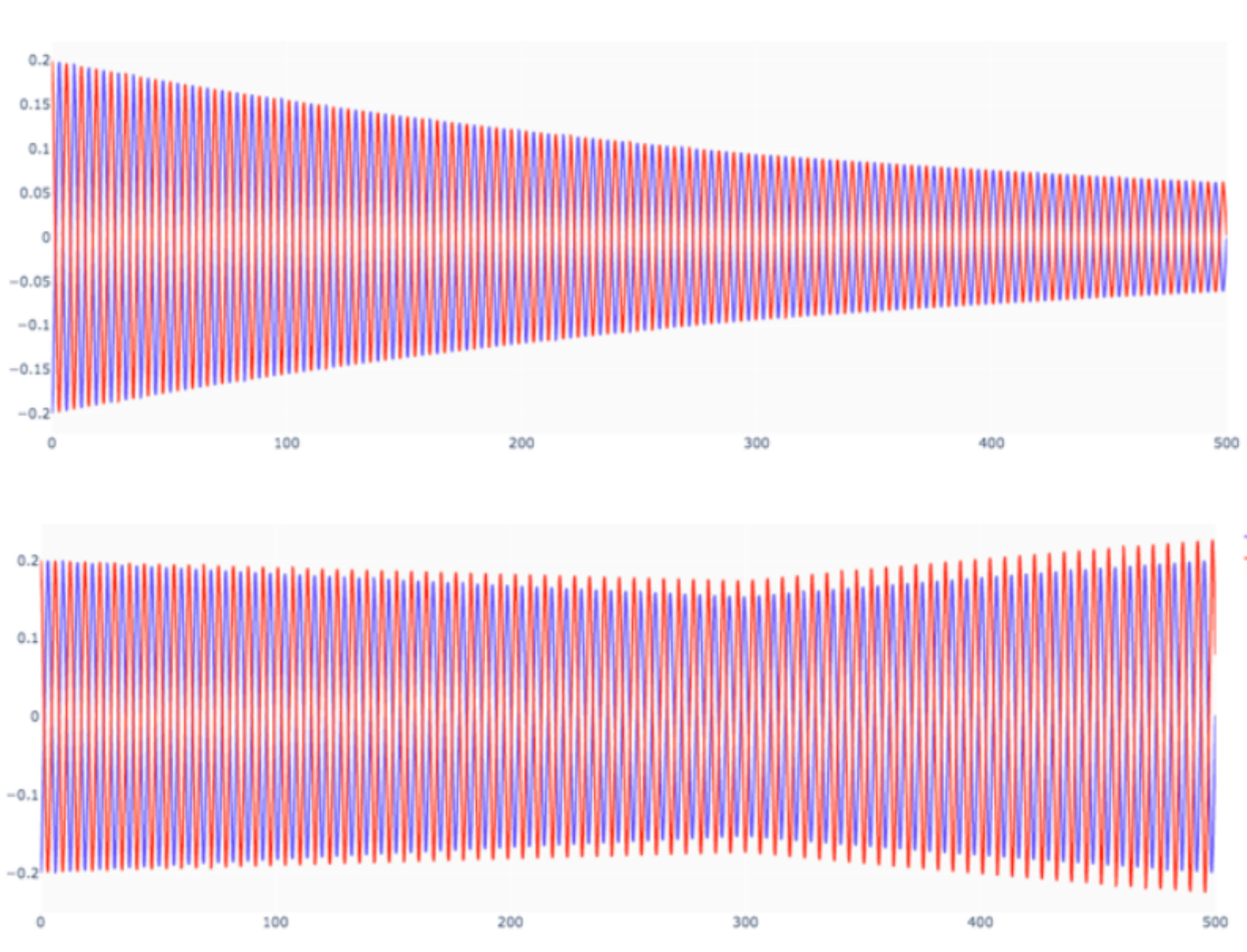}}
\caption{ 
With gain $G_1=0$ and fixed delay on speed, there is a positive gain value of $G_2$ for which the system changes from having an asymptotically stable equilibrium point to one for which only stability can be assured.
}
\label{caso:G1=0}
\end{figure}

\begin{figure}[h!]
\centerline
{\includegraphics[scale=0.45]{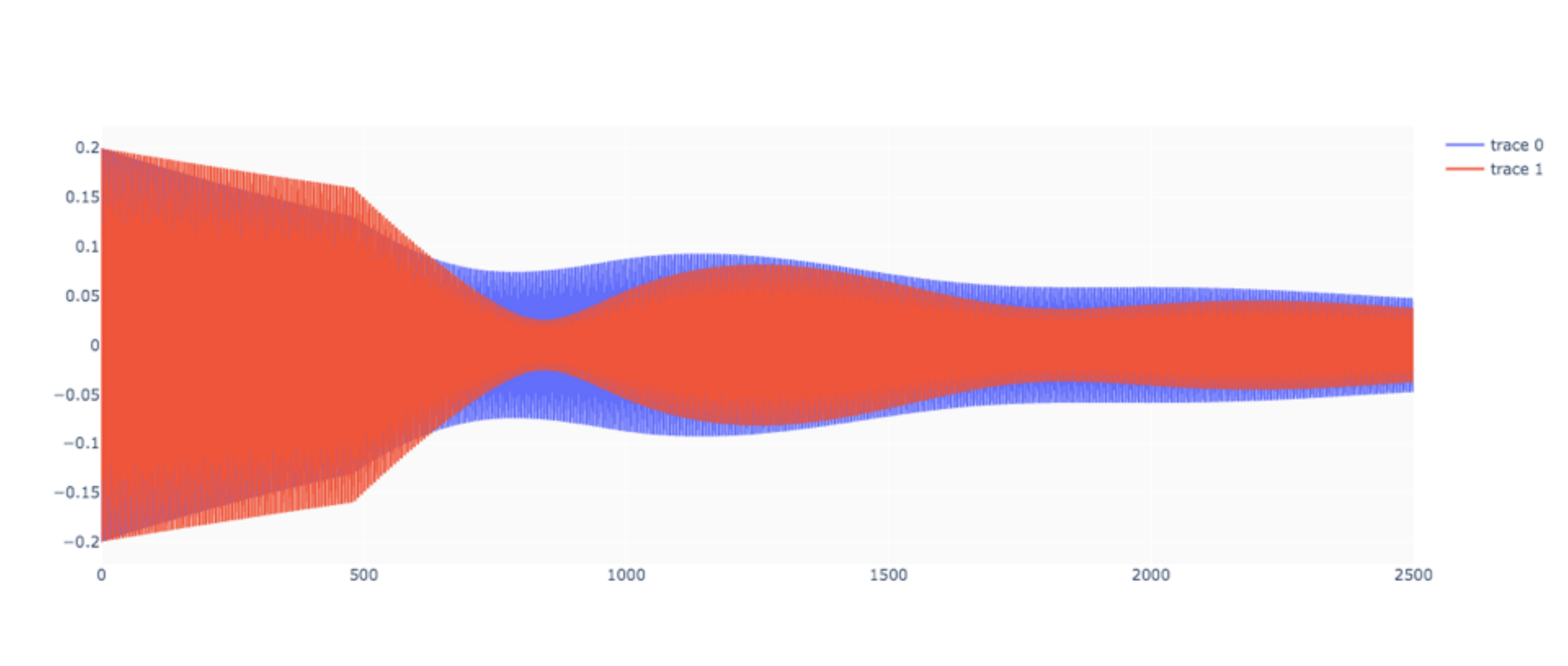}}
\caption{  
For fixed positive gain $G_1$, varying the value of the delay that is considered only in the position. The system changes from having an asymptotically stable equilibrium point to one for which only stability can be assured.
}
\label{caso:G1=0-1}
\end{figure}

We now change the situation to keep fixed positive gain constant $G_1$, varying the value of the delay that is considered only in the position, that is with $G_2=0$. As in the previous case, the main observation is that a value $d$ is reached (as observed in Figure \ref{caso:G1=0-1}) for which the system changes from having an asymptotically stable equilibrium point to one for which only stability can be assured. The decay of the system is consistent but with observable effects in a very long time.

\begin{figure}[h!]
\centerline
{\includegraphics[scale=0.45]{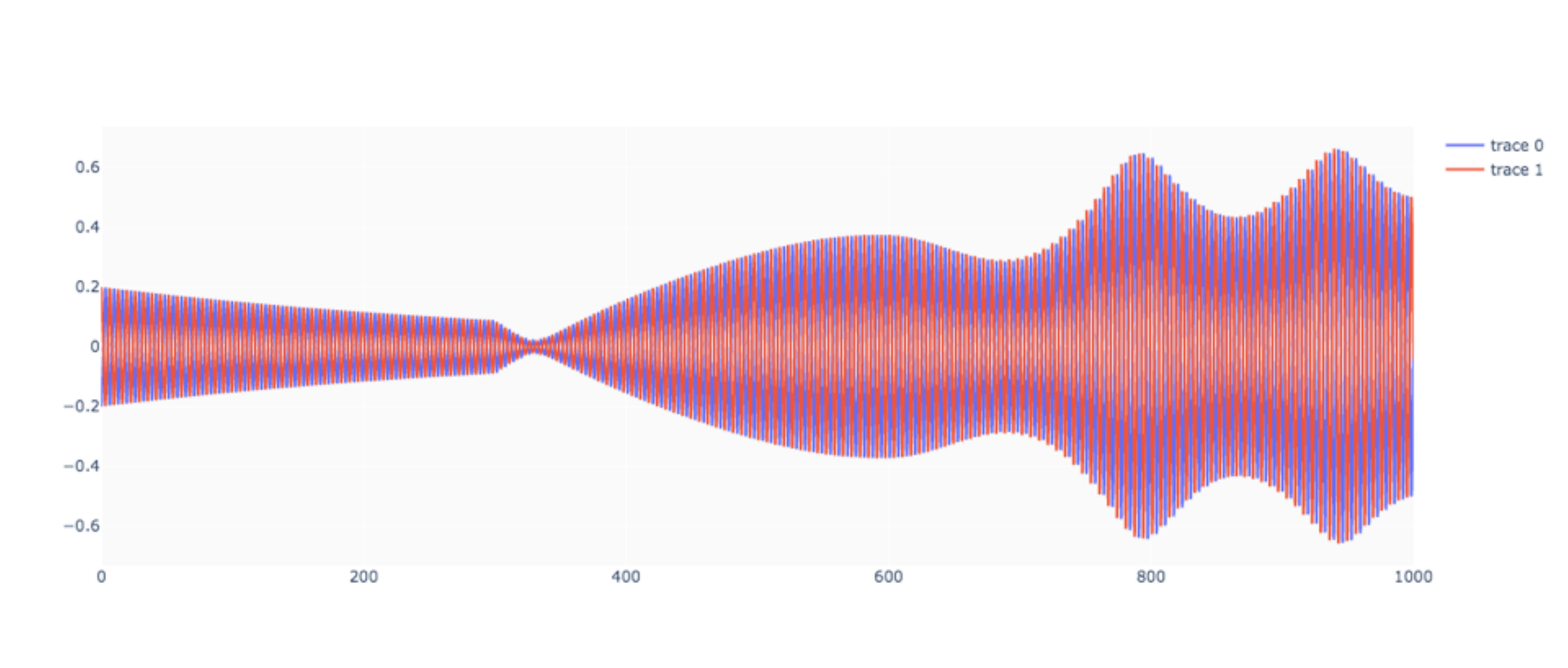}}
\caption{  
With delay fixed in the position only ($d=300$) and varying the value of the gain $G_2$ it is reached a value ($G_2=70$) in which the system, after a period of decay ($0<t<360$) gains much more energy than it previously had, reaching a large oscillation amplitude. In this case, the equilibrium point is unstable. 
}
\label{caso:G2=01}
\end{figure}

With a fixed delay in the position only and varying the value of $G_2$, as can be seen in Figure \ref{caso:G2=01}, it reaches a value of $G_2$ in which the system, after a period of decay, gains much more energy than it brought and manages, to reach a large oscillation amplitude, in this case the equilibrium point is unstable.

Under a combination of the previous cases the described behavior is persistent. For positive constants $G_1 \neq 0$, $G_2 \neq 0$ with one of these fixed and a fixed small delay value, there is a close value of the other gain constant for which the behavior of the equilibrium point changes from asymptotically stable to not. This is verified for example taking $G_1=5$ and $d=10$, where for the value $G_2=8.1$ the bifurcations hold.

The case of negative gain is even more interesting and is a source of chaotic behavior. In the graph at the top in Figure \ref{gmenos}, we observe for the two close initial conditions we are considered with fixed delay only on speed, that there is a set of negative values of $G_2$ for which both solutions evolve in such a way that decay in asymptotic behavior almost until get the minimum value of zero, this occurs for $0<t<300$, the point at which start to increase their energy again substantially to go again in decay to zero and repeating this cycle once and once, although without periodicity in time. Even though the behavior of the two solutions is qualitatively the same, the change in energy and amplitude of the oscillations is notoriously different.

\begin{figure}[H]
\centerline
{\includegraphics[scale=0.4]{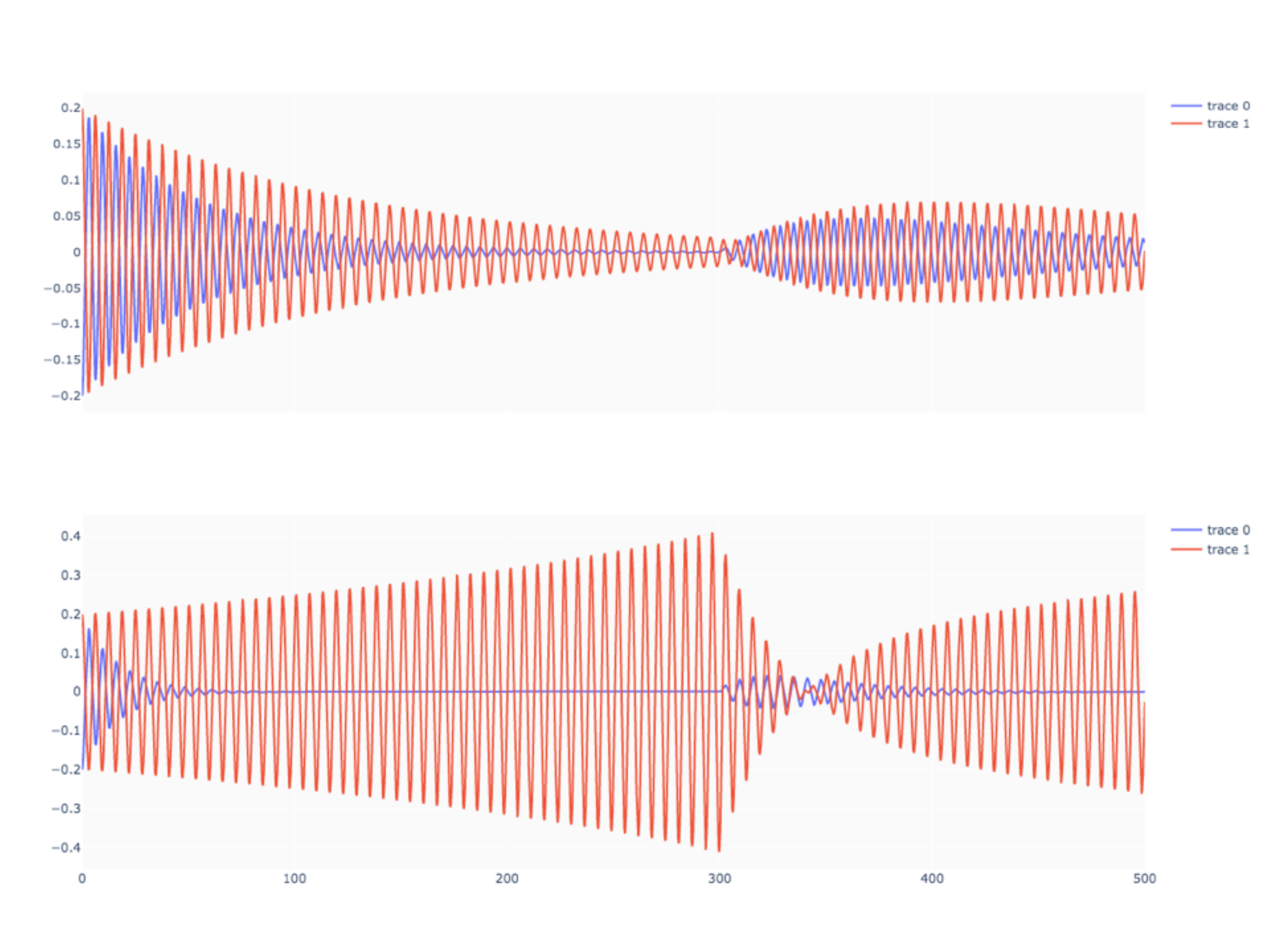}}
\caption{  
Behavior of two solutions of equation \eqref{eq:nathanson_norm feedback 2} for gain 
$G_1=0$ and fixed delay $d=300$ on speed. In the graph at the top it is observed the case $G_2=-100$ and in the graph at the bottom it is observed the case $G_2=-123$.
}
\label{gmenos}
\end{figure}

If the value of $G_2$ continues to increase, the solution corresponding to the red curve increases the value of its amplitude as the value of the gain increases (this in absolute value since the gain is being taken negative, roughly up to -118) in such a way that the asymptotic decay that was observed previously it is lost until reaching a time in which almost a harmonic oscillator regime occurs, up to a point in which it abruptly begins to lose energy during a short period of time to start to gain energy again, and this behavior of gain and loss energy alternately, is repeated cyclically. Finally, as can be seen in the graph at the bottom in Figure \ref{gmenos}, a value of $G_2$ is reached, such that contrary to the previous behavior described up to now,
the solution corresponding to the red curve starts from the initial condition increasing constantly its energy for a long period of time ($0<t<300$) in which it maintains an unstable behavior, until a point in which an abrupt loss of energy begins during a short period of time, to start to gain energy again, and this behavior of gain and loss energy alternately, is repeated cyclically. 

On the contrary, in the same interval of time in which we have described the behavior of the solution corresponding to the red curve, the solution corresponding to the blue curve loses energy very quickly giving rise to an asymptotic stability regime that is sustained for a considerable period of time ($0<t<300$), just to experiment an abrupt energy gain, which it takes the system out of stability.  This behavior of gain and loss energy alternately is repeated cyclically, which in the long term allows us to speak of the stability of the system. 

\begin{figure}[h]
\centerline
{\includegraphics[scale=0.58]{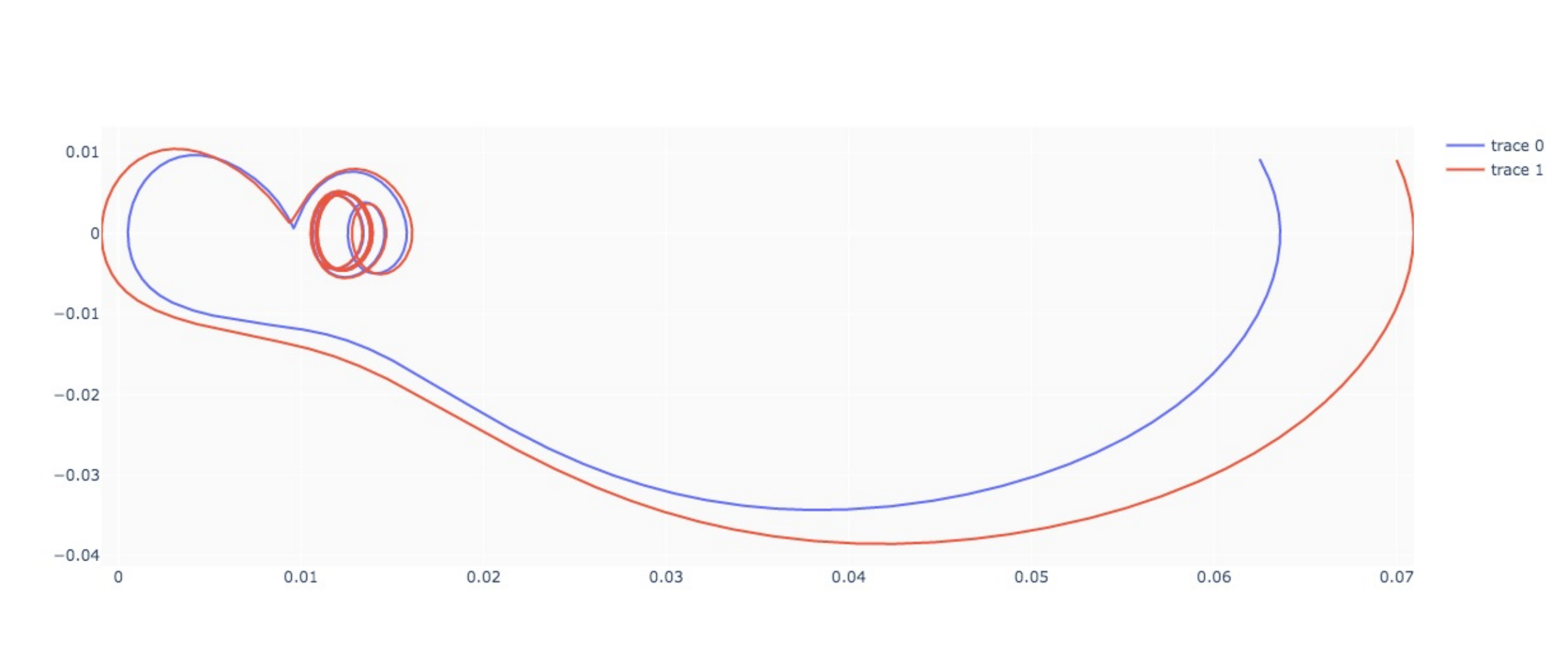}}
\caption{  
Phase portrait for two solutions of equation (\ref{system nathanson_norm non-feedback}) without delay. The asymptotically stable periodic trajectory established in Theorem \ref{main-theo 1} holds.
}
\label{periodica-d=0}
\end{figure}

 To finish with the numerical validation of the theoretical results that we have presented, Theorems \ref{main-theo 1} and \ref{main-theo 2} provide a local continuation of $T$-periodic solutions for the Nathanson model using the gain and the delay as continuation parameters. Under this setting and for the general nonautonomous case, \textit{i.e.}, taking $\delta \neq 0$, we can recreate the following scenario taking a fixed value of $\delta= 0.1579$ and two initial conditions $(0.0625, 0.0092)$ and
$(0.07, 0.0091)$: In Figure \ref{periodica-d=0} is plotted the phase portrait associated to equation (\ref{system nathanson_norm non-feedback}) without delay. 
\begin{figure}[h]
\centerline
{\includegraphics[scale=0.58]{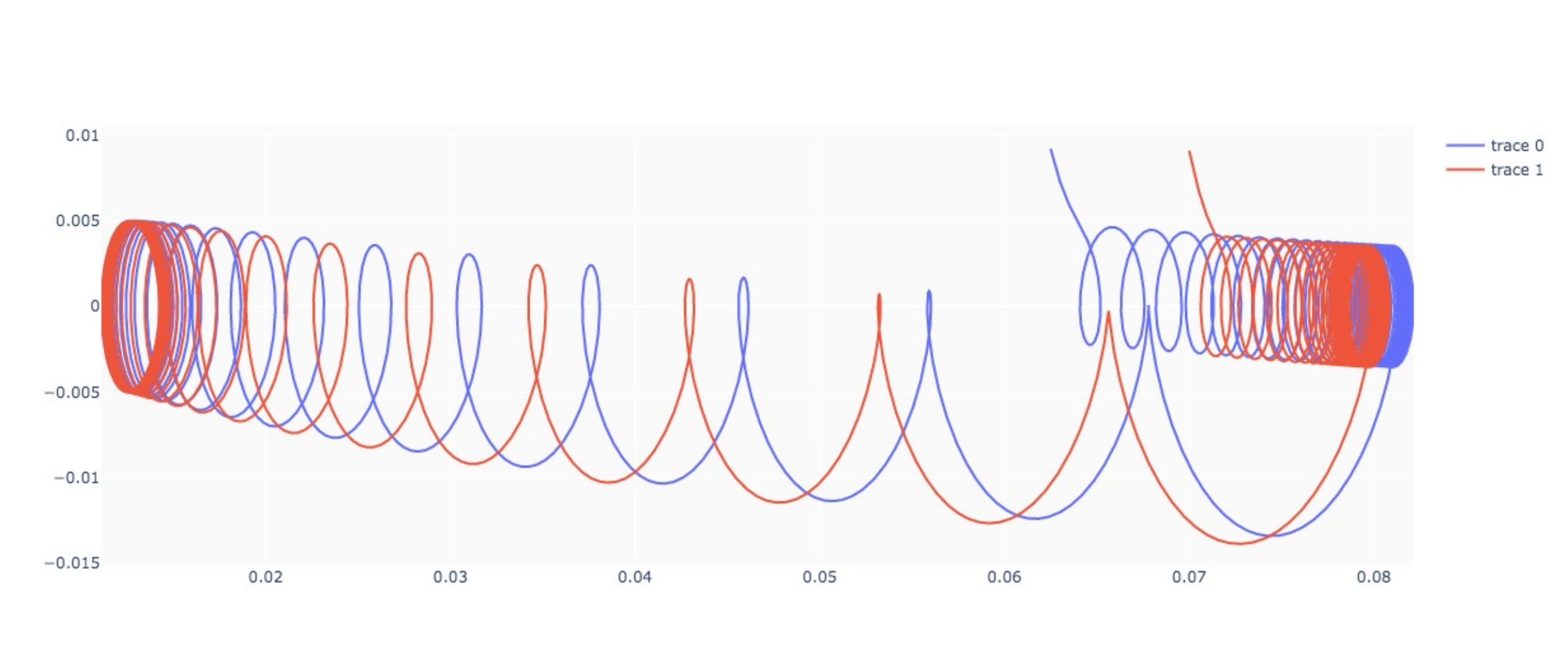}}
\caption{  
Phase portrait for two solutions of equation (\ref{system nathanson_norm non-feedback}) with delay $d=80$ and positive gain at the velocity $G_2=40$. The asymptotically stable periodic trajectory persists under the influence of these constants.
}
\label{periodica-d-Gpositivo}
\end{figure}
In this case, the solutions associated to both initial conditions approach indefinitely the asymptotically stable periodic trajectory established in Theorem \ref{main-theo 1}.

In Figure \ref{periodica-d-Gpositivo} is plotted the phase portrait associated to the equation (\ref{system nathanson_norm non-feedback}) with delay and positive gain at the velocity. In this case, the solutions associated with both initial conditions approach a region of the plane asymptotically for a considerable time to abruptly abandon it and approach the periodic trajectory established in Theorem \ref{main-theo 2}.

\begin{figure}[h]
\centerline
{\includegraphics[scale=0.5]{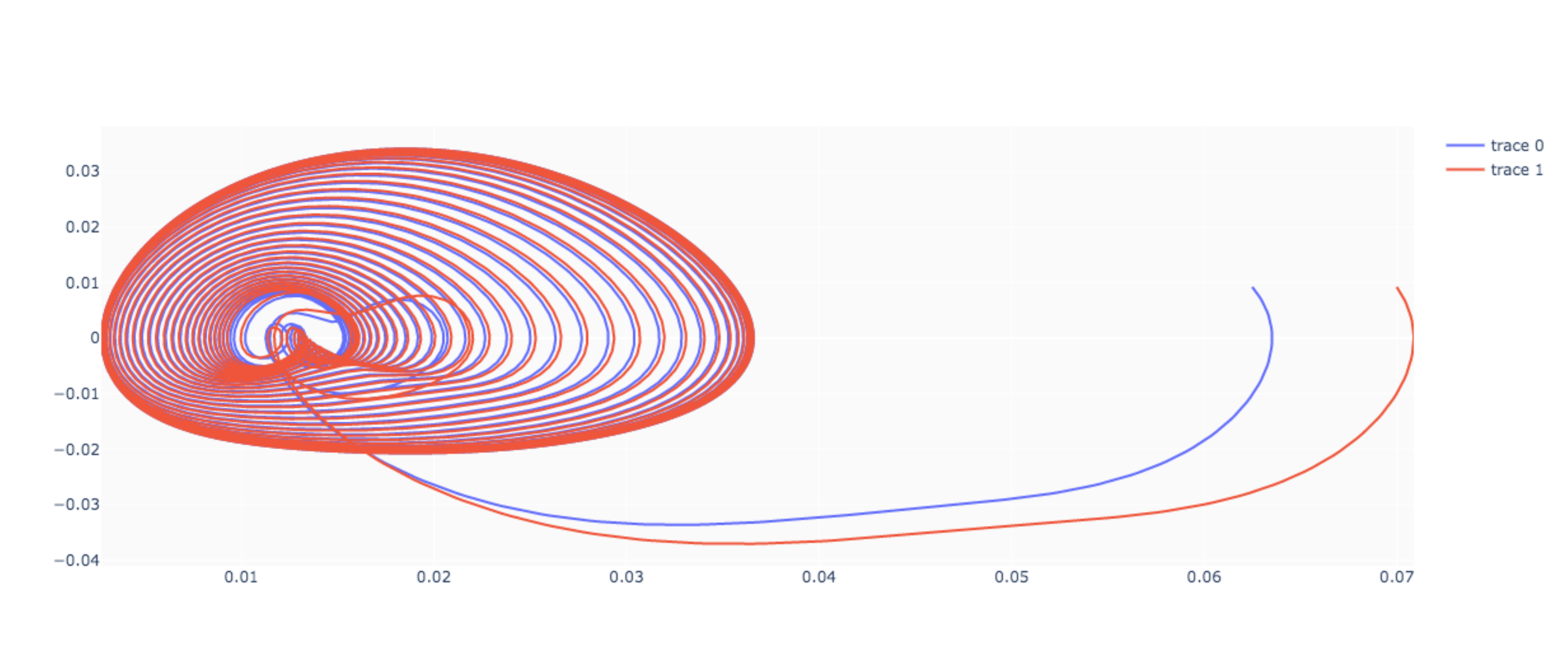}}
\caption{  
Phase portrait for two solutions of equation (\ref{system nathanson_norm non-feedback}) with delay $d=1$ and negative gain at the velocity $G_2=-8$. The asymptotically stable periodic trajectory persists under the influence of these constants.
}
\label{periodica-d-Gnegativo}
\end{figure}

Finally, in Figure \ref{periodica-d-Gnegativo} is plotted the phase portrait associated to equation (\ref{system nathanson_norm non-feedback}) with delay and negative gain at the velocity. In this case, the solutions associated with both initial conditions approach the asymptotically stable periodic trajectory established in Theorem \ref{main-theo 2} in such a way that it simulates the case of a strange attractor.


\section*{Appendix}
In this appendix we present the result of Khusainov and Yun'kova \cite{Khusainov} that provides an estimative on the delay parameter $\tau$ in the linear delay differential system
\[
(\dagger) \qquad \dot{X}(t)=AX(t)+BX(t-\tau),
\]
with  $X\in \mathbb{R}^{n}$ and $A,B\in \mathbb{M}_{n\times n}$ constant real matrices.

\begin{theorem} \label{delay size} Assume that the trivial solution of $(\dagger)$ is asymptotically stable when $\tau=0$. Let $C$ denote the real symmetric positive definite matrix satisfying 
\[
(A+B)^{T}C+C(A+B)=-I_{n},
\]
where $I_{n}$ is the $n\times n$ identity matrix. Let $\tau_{0}$ be the positive constant defined by 
\[
\tau_{0}=\Big(2\big(||A||+||B||\big)||CB||\Big)^{-1}\Big(\lambda_{\min}(C)/\lambda_{\max}(C)\Big)^{1/2},
\]
where $\lambda_{\min}(C)$ and $\lambda_{\max}(C)$ respectively denote the smallest and largest eigenvalues of $C$. Then the trivial solution of $(\dagger)$ is asymptotically stable for all $\tau < \tau_{0}.$
\end{theorem}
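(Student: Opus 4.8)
The plan is to establish asymptotic stability by a Razumikhin-type Lyapunov argument built on the quadratic form $V(X)=X^{T}CX$. First I would observe that the hypothesis at $\tau=0$ means the matrix $A+B$ is Hurwitz, which is exactly the condition that guarantees, by the classical Lyapunov theorem, the existence and uniqueness of the symmetric positive definite matrix $C$ solving $(A+B)^{T}C+C(A+B)=-I_{n}$. This is the matrix appearing in the statement, and it satisfies the two-sided bound $\lambda_{\min}(C)\|X\|^{2}\le V(X)\le\lambda_{\max}(C)\|X\|^{2}$, which will be the source of the eigenvalue ratio in $\tau_{0}$.

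Next I would rewrite the delayed system so as to expose its stable non-delayed part. Using the identity $X(t-\tau)=X(t)-\int_{t-\tau}^{t}\dot X(s)\,ds$, equation $(\dagger)$ becomes $\dot X(t)=(A+B)X(t)-B\int_{t-\tau}^{t}\dot X(s)\,ds$. Differentiating $V$ along solutions and invoking the Lyapunov equation, the principal term collapses to $-\|X(t)\|^{2}$, leaving
\[
\dot V(X(t))=-\|X(t)\|^{2}-2X(t)^{T}CB\int_{t-\tau}^{t}\dot X(s)\,ds.
\]
I would then bound the last term by $2\|X(t)\|\,\|CB\|\int_{t-\tau}^{t}\|\dot X(s)\|\,ds$ and insert the pointwise estimate $\|\dot X(s)\|\le\|A\|\,\|X(s)\|+\|B\|\,\|X(s-\tau)\|$ coming directly from $(\dagger)$.

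The crucial step, which produces the exact constant $\tau_{0}$, is the Razumikhin condition. Since $\dot X(s)$ for $s\in[t-\tau,t]$ calls on the history of $X$ over the interval $[t-2\tau,t]$, I would invoke the Razumikhin principle: it suffices to verify $\dot V<0$ at those instants $t$ for which $V(X(s))\le V(X(t))$ for every $s\in[t-2\tau,t]$. Combined with the two-sided bound on $V$, this condition yields $\|X(s)\|\le\sqrt{\lambda_{\max}(C)/\lambda_{\min}(C)}\,\|X(t)\|$ on the whole memory interval, so the integral is controlled by $\tau(\|A\|+\|B\|)\sqrt{\lambda_{\max}(C)/\lambda_{\min}(C)}\,\|X(t)\|$. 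Substituting gives
\[
\dot V(X(t))\le-\Big(1-2\tau\|CB\|(\|A\|+\|B\|)\sqrt{\lambda_{\max}(C)/\lambda_{\min}(C)}\Big)\|X(t)\|^{2},
\]
which is strictly negative precisely when $\tau<\tau_{0}$. Applying the Razumikhin stability theorem then closes the argument.

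The main obstacle I anticipate is recovering the sharp factor $(\lambda_{\min}(C)/\lambda_{\max}(C))^{1/2}$ rather than a cruder constant: a naive Lyapunov–Krasovskii estimate based on $2ab\le a^{2}+b^{2}$ would wash this factor out entirely, so the argument must route the bound through the Razumikhin comparison of $V$-values, and one must take care that this comparison is performed over the doubled interval $[t-2\tau,t]$ forced by the $X(s-\tau)$ term inside $\dot X(s)$. A secondary technical point is to use the strict Razumikhin inequality, via a comparison function $p(s)>s$, so that the strict inequality $\tau<\tau_{0}$ yields asymptotic stability rather than mere stability.
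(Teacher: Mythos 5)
Your proof is correct and follows essentially the same route as the proof behind the paper's statement: the paper does not prove this theorem itself (its ``proof'' is a pointer to the Khusainov--Yun'kova and Gopalsamy references), and the argument found there is precisely the Lyapunov--Razumikhin scheme you describe, based on $V(X)=X^{T}CX$ with $C$ the Lyapunov solution for the Hurwitz matrix $A+B$, the rewriting $X(t-\tau)=X(t)-\int_{t-\tau}^{t}\dot{X}(s)\,ds$, the pointwise bound $\|\dot{X}(s)\|\le\|A\|\,\|X(s)\|+\|B\|\,\|X(s-\tau)\|$, and the Razumikhin comparison over the doubled memory interval $[t-2\tau,t]$, which recovers the constant $\tau_{0}$ exactly. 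The two technical points you flag --- the strict comparison function $p(s)>s$ (absorbing the slack in $\tau<\tau_{0}$ so as to obtain asymptotic rather than mere stability) and the doubled interval forced by the $X(s-\tau)$ term inside $\dot{X}(s)$ --- are exactly the devices used in the cited proofs, so your reconstruction is faithful and complete.
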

\begin{proof}
The proof of this result can be found in \cite{Khusainov,Gopalsamy}
\end{proof}

\bibliographystyle{elsarticle-num}
\bibliography{biblio}
\end{document}